\def \W {\mathcal{W}}
\def \N {\mathcal{N}}
\def \bW {\overline{\mathcal{W}}}
\def \dist {{\rm dist}}
\DeclareMathOperator{\Rm}{Rm}
\DeclareMathOperator{\Ric}{Ric}
\DeclareMathOperator{\Vol}{Vol}
\DeclareMathOperator{\spt}{spt}
\newcommand*{\rom}[1]{\rm {\expandafter\@slowromancap\romannumeral #1@}}
\numberwithin{equation}{section}
\newtheorem{Theorem}{Theorem}[section]
\newtheorem{Proposition}[Theorem]{Proposition}
\newtheorem{Lemma}[Theorem]{Lemma}
\newtheorem{Corollary}[Theorem]{Corollary}
\theoremstyle{definition}
\title{A uniform Sobolev inequality for ancient Ricci flows with bounded Nash entropy}
\author{Pak-Yeung Chan, Zilu Ma, and Yongjia Zhang}
\begin{document}

\maketitle

\begin{abstract}
    This note is a continuation of \cite{CMZ21}. We shall show that an ancient Ricci flow with uniformly bounded Nash entropy must also have uniformly bounded $\nu$-functional. Consequently, on such an ancient solution there are uniform logarithmic Sobolev and Sobolev inequalities. We emphasize that the main theorem in this paper is true so long as the theory in \cite{Bam20c} is valid, and in particular, when the underlying manifold is closed. 
\end{abstract}

\section{Introduction}

Qi S Zhang \cite{Zhq07} proved a uniform Sobolev inequality for Ricci flows on closed manifolds. His method depends on Perelman's monotonicity formula. Indeed, Perelman's $\mu$-functional is known to be increasing in time, and by this reason, is bounded from below by the geometric data of the initial manifold. On the other hand, the $\mu$-functional is the logarithmic Sobolev constant. The Sobolev inequality then follows from these facts. 

If the underlining Ricci flow is an ancient solution, then there is no such ``initial manifold'' to make use of when estimating the $\mu$-functional. Nevertheless, Perelman's asymptotic soliton \cite[Proposition 11.2]{Per02} provides a substitute. This is exactly how \cite[Theorem1.4]{CMZ21} is proved. In that theorem, the authors show that, on an ancient solution admitting an asymptotic soliton, the $\mu$-functional at any time and at any scale is bounded from below by the entropy of the asymptotic shrinker. This also means that the $\nu$-functional is uniformly bounded.

However, Perelman's asymptotic soliton is not known to exist in general cases. This fact is a great restriction to the application of the logarithmic Sobolev and Sobolev inequalities proved in \cite[Theorem 1.4, Corollary 1.5]{CMZ21}. In this article, we shall remove this restriction by showing that Bamler's tangent flow at infinity \cite[Theorem 1.41]{Bam20c}, which is known to be a metric soliton, can also be used to estimate the $\mu$-functional for the ancient solution. For most of the definitions which appear in this article, the reader will find a brief introduction in \cite[Section 2]{CMZ21}; the notations in \cite{CMZ21} are completely adopted in this article.

Let $(M^n,g(t))_{t\in(-\infty,0]}$ be a complete ancient Ricci flow. Throughout this article, we shall make a technical assumption that $g(t)$ has bounded curvature within each compact time interval, namely,
\begin{eqnarray}\label{curvaturebound}
\sup_{M\times[t_1,t_2]}\big|{\Rm}_{g_t}\big|<\infty \quad \text{ for all }\quad -\infty<t_1\leq t_2\leq 0.
\end{eqnarray}
Note that the curvature bound may depend on the interval $[t_1,t_2]$, and is not uniform on $M\times(-\infty,0]$. We further assume the existences of a point $p_0$ $\in M$ and a  constant $Y\in(0,\infty)$, such that for all $\tau>0$, the Nash entropy based at $(p_0, 0)$ is bounded from below by $-Y$, i.e.,
\begin{equation}\label{nashuniformbound}
    \mathcal{N}_{p_0,0}(\tau)\geq -Y \quad \text{  for all }\quad \tau>0. 
\end{equation}

To apply the results in \cite{Bam20c}, we make an additional assumption: if the sequence in \cite[(1.2)]{Bam20c} is taken to be $$\left\{\big((M,g_i(t))_{t\in(-\infty,0]},(\nu^i_t)_{t\in(-\infty,0]}\big)\right\}_{i=1}^\infty,$$ where $\tau_i\to \infty$, $g_i(t):=\tau_i^{-1}g(\tau_it)$, and $\nu^i_t=\nu_{p_0, 0 \,|\,\tau_i t}$, then the theorems in \cite{Bam20c} (especially \cite[Theorem 1.6]{Bam20c} and all the statements contained in \cite[Theorem 1.15]{Bam20c}) are still valid. In particular, this is the case if $M^n$ is closed. Furthermore, we strongly believe that all the results in \cite{Bam20c} can be verified with only minor modifications under the assumptions (\ref{curvaturebound}) and (\ref{nashuniformbound}) alone. In other words, the assumption that the manifolds in \cite[(1.2)]{Bam20c} are closed can be replaced by (\ref{curvaturebound}).

The main theorem of this article is:
\begin{Theorem}\label{nu-functional}
Under the assumptions stated above, we have
\begin{eqnarray}\label{nu_nonsense_00}
\inf_{t\leq 0}\nu(g(t))=\mu_\infty>-\infty,
\end{eqnarray}
where $$\mu_\infty:=\inf_{\tau>0}\mathcal{N}_{p_0,0}(\tau)=\lim_{\tau\rightarrow\infty }\mathcal{N}_{p_0,0}(\tau)\geq -Y.$$
In particular, this is true for closed ancient Ricci flows with bounded Nash entropy.
\end{Theorem}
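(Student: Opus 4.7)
The strategy mirrors \cite{CMZ21}, replacing Perelman's smooth asymptotic soliton with Bamler's (metric) tangent flow at infinity. I will prove the two inequalities $\inf_{t \le 0} \nu(g(t)) \le \mu_\infty$ and $\inf_{t \le 0} \nu(g(t)) \ge \mu_\infty$ separately. The identity $\mu_\infty = \lim_{\tau \to \infty} \mathcal{N}_{p_0,0}(\tau) \ge -Y$ is immediate from Bamler's monotonicity of the Nash entropy together with \eqref{nashuniformbound}. Perelman's monotonicity of $\mu(g(t), T - t)$ in $t$ further implies that $t \mapsto \nu(g(t))$ is non-decreasing, so $\inf_{t \le 0} \nu(g(t)) = \lim_{t \to -\infty} \nu(g(t))$.

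For the upper bound I use
\[
\nu(g(-\tau)) \;\le\; \mu(g(-\tau), \tau) \;\le\; \mathcal{W}_{p_0,0}(\tau) \;\le\; \mathcal{N}_{p_0,0}(\tau),
\]
obtained by testing $\mathcal{W}(g(-\tau), \cdot, \tau)$ against the conjugate heat kernel based at $(p_0,0)$, and using the identity $\mathcal{W}_{p_0,0}(\tau) = \mathcal{N}_{p_0,0}(\tau) + \tau\,\mathcal{N}'_{p_0,0}(\tau)$ together with the monotonicity of $\mathcal{N}_{p_0,0}$. Sending $\tau \to \infty$ gives $\inf_{t \le 0} \nu(g(t)) \le \mu_\infty$.

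For the lower bound, fix $t_0 \le 0$ and $\tau_0 > 0$; it suffices to show $\mu(g(t_0), \tau_0) \ge \mu_\infty$. Perelman's monotonicity with $T := t_0 + \tau_0$ gives
\[
\mu(g(t_0), \tau_0) \;\ge\; \mu\bigl(g(-\tau_i),\, \tau_i + t_0 + \tau_0\bigr) \;=\; \mu\bigl(\tilde g_i(-1),\, a_i\bigr),
\]
where $\tilde g_i(t) := \tau_i^{-1} g(\tau_i t)$, $a_i := 1 + (t_0 + \tau_0)/\tau_i \to 1$, and the second equality uses the scale invariance of $\mu$. Under the standing hypothesis that the theorems of \cite{Bam20c} apply, the rescaled sequence $(M, \tilde g_i(t), \tilde\nu^i_t)$ subconverges in the $\mathbb{F}$-sense to a metric gradient shrinking Ricci soliton whose Nash entropy is constant and equal to $\mu_\infty$. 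On such a soliton the $\mu$-invariant at the self-similar scale equals the soliton entropy, and combined with lower semicontinuity of $\mu$ along the $\mathbb{F}$-convergent sequence this yields $\liminf_i \mu(\tilde g_i(-1), a_i) \ge \mu_\infty$, and hence $\mu(g(t_0), \tau_0) \ge \mu_\infty$.

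The main obstacle is this final step. Unlike the smooth Cheeger--Gromov convergence used in \cite{CMZ21}, $\mathbb{F}$-convergence does not automatically provide even lower semicontinuity of $\mu$, because $\mu$ is an infimum whose minimizers are intrinsically tied to the geometry. The key technical input is therefore to control a sequence of (near-)minimizers of $\mathcal{W}(\tilde g_i(-1), \cdot, a_i)$ along the convergence and pass to a limiting object on the metric soliton, relying on Bamler's structural decomposition: local smooth convergence on the regular part of high codimension, convergence of the conjugate heat kernel measures, and the rigidity of metric shrinkers. The identification $\mu \equiv \mu_\infty$ on the tangent soliton in particular uses that $\tilde{\mathcal{N}}^i_{p_0,0}(\tau) = \mathcal{N}_{p_0,0}(\tau_i \tau) \to \mu_\infty$ for every $\tau > 0$.
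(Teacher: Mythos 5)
Your reduction of the lower bound to $\liminf_{i}\mu(\tilde g_i(-1),a_i)\ge \mu_\infty$ is where the whole difficulty of the theorem lives, and as you yourself concede, that step is not carried out: it is a genuine gap, not a technicality to be quoted. Two things are missing. First, lower semicontinuity of $\mu$ under $\mathbb{F}$-convergence would require controlling a sequence of near-minimizers of $\mathcal{W}(\tilde g_i(-1),\cdot,a_i)$; the manifolds are complete and possibly noncompact, the convergence is smooth only on the regular part $\mathcal{R}^\infty$, and such near-minimizers may escape to spatial infinity or concentrate near the singular set. Neither \cite{Bam20c} nor \cite{CMZ21} supplies the compactness/concentration result you would need, and proving it would essentially amount to a new semicontinuity theorem for $\mu$ under $\mathbb{F}$-convergence. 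Second, even granting a limit, you invoke the identity ``$\mu$ at the self-similar scale equals the soliton entropy'' on the tangent flow; this is known for smooth shrinkers, but the tangent flow here is only a metric soliton with a (possibly nonempty) singular set, and that identity is not available off the shelf in this setting. (Your upper bound $\inf_{t\le 0}\nu(g(t))\le\mu_\infty$ via $\nu(g(-\tau))\le\mu(g(-\tau),\tau)\le\mathcal{W}_{p_0,0}(\tau)\le\mathcal{N}_{p_0,0}(\tau)$ is fine, modulo the usual noncompact approximation issues; that direction is the easy one.)

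The paper closes the hard direction by a route that never touches the infimum defining $\mu$ on the rescaled flows. One fixes a single competitor $u_0$ at $(t_0,\tau_0)$, evolves it by the conjugate heat equation to get a flow $d\mu_t=u\,dg_t$, and uses Perelman's monotonicity in the form $-\frac{d}{dt}\big(\tau_t\mathcal{N}(t)\big)=\overline{\mathcal{W}}(t)\le\overline{\mathcal{W}}(g_{t_0},u_0,\tau_0)$, averaged over the dyadic intervals $[-2\tau_i,-\tau_i]$. The only limiting statement then needed is that the Nash-type entropy $\mathcal{N}(t)$ of this particular flow tends to $\mu_\infty$, and this is tractable because $\mu_t$ stays at bounded $W_1$-distance from $\nu_{p_0,0\,|\,t}$ (so after rescaling, the pairs $(g_i,\mu^i_t)$ $\mathbb{F}$-converge to the same tangent flow, by \cite[Lemma 5.19]{Bam20b}) and satisfies the pointwise bound $u\le C_0K(p_0,0\,|\,\cdot,\cdot)$, which provides the uniform integrability needed to run Bamler's entropy-convergence argument as in \cite[Theorem 14.45(a)]{Bam20c}. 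In other words, the infimum over test functions is taken once, at the original time and scale, and only the entropy of one explicit conjugate heat flow is passed to the limit; to complete your proposal you would either have to prove the missing semicontinuity and the singular-soliton identity, or switch to this fixed-test-function/monotonicity argument.
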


Once the $\nu$-functional is known to be bounded, the following logarithmic Sobolev inequalities and Sobolev inequalities are simply the consequences of some straightforward computations (c.f. \cite{Zhq07, LW20}).

\begin{Corollary}[The logarithmic Sobolev and Sobolev inequalities]
Under the assumptions as stated above, we have
\begin{enumerate}[(1)]
    \item Logarithmic Sobolev inequality: for any compactly supported locally Lipschitz function $u$ on $(M,g(t))$, where $t\leq 0$, and positive scale $\tau>0$, we have
    \begin{align*}
    \int_M u^2\log u^2dg_t-\left(\int_Mu^2dg_t\right)\int_M u^2dg_t+\left(\mu_\infty+n+\frac{n}{2}\log(4\pi\tau)\right)\int_Mu^2dg_t&
    \\
    \leq \tau\int_M(4|\nabla u|^2+Ru^2)dg_t&.
    \end{align*}
    \item Sobolev inequality: for any compactly supported locally Lipschitz function $u$ on $(M,g(t))$, where $t\leq 0$, we have
    \begin{equation*}
    \left(\int_M |u|^{\frac{2n}{n-2}}dg_t\right)^{\frac{n-2}{n}}\leq C(n)e^{-\frac{2\mu_\infty}{n}}\int_M(4|\nabla u|^2+Ru^2)dg_t.
    \end{equation*}
\end{enumerate}
Here $\mu_\infty$ is the lower bound of the $\nu$-functional in (\ref{nu_nonsense_00}). In particular, this is true for closed ancient Ricci flows with bounded Nash entropy.
\end{Corollary}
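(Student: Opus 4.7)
The plan is to derive both inequalities directly from the $\nu$-functional bound supplied by Theorem~\ref{nu-functional}, namely $\nu(g(t)) \geq \mu_\infty$ for every $t \leq 0$. By definition $\nu(g) = \inf_{\tau > 0}\mu(g,\tau)$, and $\mu(g,\tau)$ is the infimum of Perelman's functional $\mathcal{W}(g,\phi,\tau)$ over $\phi$ satisfying $\int_M (4\pi\tau)^{-n/2}e^{-\phi}\,dg = 1$. So the content of the hypothesis is that $\mathcal{W}(g(t),\phi,\tau) \geq \mu_\infty$ for every $\tau>0$ and every admissible $\phi$; both (1) and (2) will be extracted from this single inequality by suitable choices of test function and tuning of the scale $\tau$.

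For the logarithmic Sobolev inequality (1), I would first reduce to the normalized case $\int_M u^2 \, dg_t = 1$; the general form stated in the corollary follows by applying the normalized version to $u/\|u\|_{L^2}$ and clearing denominators, which produces the correction term $-(\int u^2)\log(\int u^2)$ on the left (the statement in the corollary appears to contain a typographical slip at this term). Under the normalization, set $\phi := -\log(u^2) - \tfrac{n}{2}\log(4\pi\tau)$, so that $(4\pi\tau)^{-n/2}e^{-\phi} = u^2$ and the admissibility constraint holds. Using $|\nabla\phi|^2 u^2 = 4|\nabla u|^2$ one computes
\begin{equation*}
\mathcal{W}(g(t),\phi,\tau) = \tau\int_M (4|\nabla u|^2 + Ru^2)\,dg_t - \int_M u^2\log u^2\,dg_t - \tfrac{n}{2}\log(4\pi\tau) - n,
\end{equation*}
and the desired inequality is obtained from $\mathcal{W} \geq \mu_\infty$ by rearrangement. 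A standard regularization (replace $u^2$ by $u^2 + \varepsilon$, then let $\varepsilon \to 0^+$) handles the singularity of $\log$ at the zero set of $u$.

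To deduce (2), I would exploit the freedom in $\tau$. Writing $A := \int_M(4|\nabla u|^2 + Ru^2)\,dg_t$ for $L^2$-normalized $u$, the right-hand side of (1) is minimized in $\tau$ at $\tau^* = n/(2A)$ (assuming $A>0$, which can be arranged by a preliminary reduction), and substituting back yields the scale-invariant Euclidean-type logarithmic Sobolev inequality
\begin{equation*}
\int_M u^2\log u^2\,dg_t \leq \tfrac{n}{2}\log\bigl(C_1(n)\, A\bigr) - \mu_\infty.
\end{equation*}
From here, (2) follows by the classical equivalence between Euclidean log-Sobolev and $L^{2n/(n-2)}$-Sobolev inequalities: I would apply the displayed inequality to $u$ replaced by $u^{1+q}/\|u^{1+q}\|_{L^2}$ for a sequence of exponents $q$, iterate in the style of Moser, and pass to the critical exponent. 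The exponentiation inherent to this iteration is precisely what converts the additive $-\mu_\infty$ into the multiplicative factor $e^{-2\mu_\infty/n}$ appearing in (2). This iterative derivation is the one carried out explicitly in \cite{Zhq07}, which the corollary's statement cites as its source.

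Since the paper classifies both derivations as ``straightforward computations'', the only real work is bookkeeping: recovering the precise constant $C(n)e^{-2\mu_\infty/n}$ through the Moser iteration, and checking that the scalar curvature term $Ru^2$ does not cause trouble at intermediate steps (a concern that is resolved by keeping $u$ compactly supported and invoking the local boundedness (\ref{curvaturebound})). I do not anticipate any genuine obstacle beyond this careful accounting.
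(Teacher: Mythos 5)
Your proposal is correct and follows essentially the same route as the paper, which gives no proof of its own but defers to exactly the ``straightforward computations'' of \cite{Zhq07, LW20} that you reconstruct: part (1) from $\mathcal{W}(g(t),\phi,\tau)\geq\nu(g(t))\geq\mu_\infty$ with the test function $\phi=-\log u^2-\tfrac{n}{2}\log(4\pi\tau)$, and part (2) by optimizing in $\tau$ and invoking the classical log-Sobolev--to--Sobolev equivalence (where $R\geq 0$ on ancient flows makes the operator $-4\Delta+R$ nonnegative, so the iteration goes through). Your observation that the second term on the left of (1) should read $\left(\int_M u^2\,dg_t\right)\log\left(\int_M u^2\,dg_t\right)$ is also correct.
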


As an application of the Sobolev inequality, we shall also prove the following volume growth lower bound for steady gradient Ricci solitons. Note that we do not make any curvature assumptions in the following theorem.

\begin{Theorem}
\label{thm: steady vol growth}
Let $(M^n,g,f)$ be a complete steady gradient Ricci soliton normalized in the way that
$$\Ric = \nabla^2 f,\quad
    R+|\nabla f|^2=1.$$
Suppose that the induced ancient Ricci flow satisfies the same assumptions as in Theorem \ref{nu-functional}.
Fix a point $o\in M.$
Then 
\[
|B_r(o)|
\ge c(n) e^{\mu_\infty} r^{n/2}\quad\text{ for all }\quad r>10,
\]
where $c(n)$ is a dimensional constant and $\mu_\infty$ is as defined in Theorem \ref{nu-functional}. Here, we denote by $|\Omega|$ the volume of a measurable subset $\Omega\subset M$ and by $B_r(o)$ the geodesic ball centered at $o$ with radius $r.$
\end{Theorem}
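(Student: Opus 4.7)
The plan is to apply the Sobolev inequality of Corollary 1.2 at the slice $t=0$ to a distance-based cutoff, exploit the steady soliton structure to gain an extra factor $1/r$ in the scalar curvature integral, and then iterate dyadically. Tracing $\Ric=\nabla^2 f$ gives $R=\Delta f$, while combining $R+|\nabla f|^2=1$ with Chen's non-negativity of $R$ on complete steady solitons gives $|\nabla f|\le 1$. Throughout, write $V(r):=|B_r(o)|$.

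For $r>10$ I would take the Lipschitz cutoff $\eta=\phi(d(\cdot,o))$ with $\phi\equiv 1$ on $[0,r/2]$, $\phi\equiv 0$ on $[r,\infty)$, and $|\phi'|\le 2/r$. The crux is an integration-by-parts bound on the scalar curvature term: since $R=\Delta f$ and $\eta$ is compactly supported,
\[
\int_M R\,\eta^2\,dg = -2\int_M \eta\,\nabla\eta\cdot\nabla f\,dg \le 4\,V(r)/r,
\]
where the last inequality uses $\eta\le 1$, $|\nabla\eta|\le 2/r$, $|\nabla f|\le 1$, and $\spt\eta\subset B_r(o)$. This is the decisive improvement over the naive bound $\int R\eta^2\le V(r)$ that would follow from $R\le 1$ alone. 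Combined with $\int|\nabla\eta|^2\le 4V(r)/r^2$, Corollary 1.2 then yields
\[
V(r/2)^{(n-2)/n}\le C_1(n)\,e^{-2\mu_\infty/n}\,V(r)/r \quad\text{for all } r>10.
\]

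Iterating along the dyadic scales $r_k=2^k r_0$ and setting $L_k:=\log V(r_k)$ produces a linear recursion of the form $L_k\ge \alpha L_{k-1}+k\log 2+c_0(n)+2\mu_\infty/n$ with $\alpha:=(n-2)/n<1$. Summing the resulting geometric series and using $1/(1-\alpha)=n/2$, the $\mu_\infty$ contributions telescope to $(2\mu_\infty/n)\cdot(n/2)=\mu_\infty$ in the exponent and the tail $V(r_0)^{\alpha^k}\to 1$, giving $V(r_k)\ge c(n)\,e^{\mu_\infty}\,r_k^{n/2}$ for $k$ large; monotonicity of $V$ then interpolates between consecutive dyadic scales to cover all of $r>10$. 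The main (and essentially only) technical content is the integration-by-parts estimate above: without the extra factor $1/r$ extracted from the soliton identity $R=\Delta f$, the recursion closes only to a constant lower bound on $V(r)$, and it is exactly this gain that forces the volume growth exponent to be $n/2$ and produces the prefactor $e^{\mu_\infty}$.
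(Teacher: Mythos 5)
Your core computation is exactly the paper's: Corollary 1.2 applied to a distance cutoff, with the decisive integration by parts $\int R\,\eta^2 = -2\int \eta\,\nabla\eta\cdot\nabla f$ using $R=\Delta f$ and $|\nabla f|\le 1$ to gain the factor $1/r$, followed by a dyadic iteration whose bookkeeping (via $1/(1-\alpha)=n/2$) produces the exponent $n/2$ and the prefactor $C_{\rm Sob}^{-n/2}=c(n)e^{\mu_\infty}$; this is Proposition 3.1 of the paper combined with Corollary 1.2, with the paper's test function $u=(r-d(\cdot,o))_+$ playing the role of your $r\eta$.

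However, the way you close the iteration has a genuine gap. You iterate \emph{upward} from a fixed base scale $r_0>10$ and invoke $V(r_0)^{\alpha^k}\to 1$, concluding the bound ``for $k$ large'' and then claiming monotonicity of $V$ interpolates to all $r>10$. The threshold for ``$k$ large'' depends on $\log V(r_0)$, which is a manifold-dependent quantity with no a priori lower bound at this stage (a uniform lower bound on $V(r_0)$ is essentially what the theorem asserts), so your argument only yields $V(r)\ge c(n)e^{\mu_\infty}r^{n/2}$ for $r$ beyond a manifold-dependent threshold; interpolation between dyadic scales cannot reach the range $r\in(10,r_K)$, whereas the theorem claims the bound with the uniform constant $c(n)e^{\mu_\infty}$ for \emph{every} $r>10$. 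The paper resolves exactly this point by running the iteration \emph{downward}: starting from the given $r>10$ it descends through dyadic scales to some $s\in[1/2,1)$, then continues below scale $1$ using the small-scale form of the inequality ($V(\rho)\ge c_1\tfrac{\rho^2}{4}V(\rho/2)^{a}$ for $\rho\in(0,1)$, where the gradient term now dominates), and anchors the infinite descent with $\lim_{m\to\infty}V(2^{-m}s)^{a^{m}}=1$, which holds because $M$ is smooth so $V(\rho)\sim\omega_n\rho^n$ as $\rho\to 0$ and $a^{m}\log V(2^{-m}s)\to 0$. This gives the uniform base bound $V(s)\ge c(n)C_{\rm Sob}^{-n/2}$ for $s\in[1/2,1)$, from which the stated estimate follows for all $r>10$. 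Your proof becomes complete once you replace the upward iteration from $r_0>10$ by this downward iteration (or otherwise supply a unit-scale non-collapsing bound depending only on $n$ and $C_{\rm Sob}$).
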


This result generalizes the previous results on volume growth obtained in \cite{MS13} without too much extra restrictions. And the volume lower bound here is in a similar form as that for shrinking Ricci solitons proved in \cite[Proposition 6]{LW20}.

In dimension three, it is a consequence of the Hamilton-Ivey pinching estimate that any complete steady gradient Ricci soliton has nonnegative sectional curvature (see \cite{CBl09} and the references therein). When the curvature of the soliton is strictly positive, i.e., the manifold does not split locally, a result of Catino-Mastrolia-Monticelli \cite{CMM16} gives the following lower bound of the volume growth:
\begin{equation}\label{lower vol est 3d}
    |B_r(o)|\geq cr^2\quad\text{ for all $r$ large enough}.
\end{equation}
It will be interesting to see if the quadratic growth in (\ref{lower vol est 3d}) is sharp or not. Under the additional condition that the scalar curvature attains its maximum, we show that the steady gradient Ricci soliton must have quadratic volume growth. We do not impose any non-collapsed condition. In particular, the result also applies to the flying-wing examples constructed by Lai \cite{Lai20}.

\begin{Theorem}\label{steady 3d vol}
Suppose $(M^3, g, f)$ is a three dimensional complete steady gradient Ricci soliton with positive sectional curvature. Assume that the scalar curvature attains its maximum somewhere on $M$. Then there exists a positive constant $C$ such that for all large $r$, it holds that
\begin{equation}\label{quad vol}
C^{-1}r^2\leq |B_r(o)|\leq Cr^2.
\end{equation}
\end{Theorem}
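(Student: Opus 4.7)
The plan is to establish the two inequalities separately, beginning with structural consequences of the soliton identities. Differentiating $R+|\nabla f|^2\equiv 1$ yields Hamilton's identity $\nabla R=-2\,\Ric(\nabla f)$. Since positive sectional curvature forces $\Ric>0$ in dimension three, at any maximum point $p^*$ of $R$ we must have $\nabla f(p^*)=0$, so $R(p^*)=1$ is the global maximum. Consequently $R\le 1$ and $|\nabla f|\le 1$ globally, $p^*$ is the unique critical point of $f$ (a global minimum since $\nabla^2 f=\Ric>0$), $f$ is proper, and by Morse theory each level set $\Sigma_s:=\{f=s\}$ with $s>f(p^*)$ is diffeomorphic to $S^2$. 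Along the integral curves of $\nabla f$ emanating from $p^*$, the identity $\frac{d}{dt}R=-2\,\Ric(\nabla f,\nabla f)<0$ together with an asymptotic argument forces $R\to 0$ at infinity, hence $|\nabla f|\to 1$.

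For the lower bound $|B_r(o)|\ge cr^2$, I invoke Catino-Mastrolia-Monticelli \cite{CMM16}, whose argument uses only positivity of the sectional curvature and therefore applies unconditionally, including to the collapsed flying-wing examples of \cite{Lai20}. For the upper bound $|B_r(o)|\le Cr^2$, which is the main new ingredient, the plan is to combine Gauss-Bonnet on the spherical level sets with the coarea formula. From $\Ric=\nabla^2 f$ one reads off the second fundamental form $II=\Ric|_{T\Sigma_s}/|\nabla f|$ and the mean curvature $H=(R-R_{\nu\nu})/|\nabla f|$, where $\nu:=\nabla f/|\nabla f|$. Using the 3D identity $K_M(e_1,e_2)=R/2-R_{\nu\nu}$, the Gauss equation writes $K_{\Sigma_s}=(R/2-R_{\nu\nu})+\det(II|_{T\Sigma_s})$, with both summands nonnegative; Gauss-Bonnet then gives
\[
G(s)-2J(s)\le 8\pi,\qquad G(s):=\int_{\Sigma_s} R\, dA,\qquad J(s):=\int_{\Sigma_s} R_{\nu\nu}\, dA.
\]
Flowing by $\nabla f/|\nabla f|^2$ and using $\partial_s R=-2R_{\nu\nu}$ along this flow yields
\[
G'(s)=-2J(s)+\int_{\Sigma_s}\frac{R(R-R_{\nu\nu})}{1-R}\, dA,
\]
whose second term is $o(G(s))$ since $R\to 0$. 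Combined with $2J\ge G-8\pi$ this gives $G'\le -(1-o(1))G+8\pi$, so $G$ stays uniformly bounded. The Stokes identity $\int_{\Sigma_s}|\nabla f|\, dA=\int_{\{f\le s\}}\Delta f\, dV=\int_{\{f\le s\}}R\, dV$ combined with coarea and $|\nabla f|\to 1$ then gives $A(s)=|\Sigma_s|\le C\int_0^s G(\tau)\, d\tau\le Cs$. One more coarea integration, together with the 1-Lipschitz inclusion $B_r(o)\subseteq\{f\le r+f(o)\}$, delivers $|B_r(o)|\le Cr^2$.

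The main obstacle is making the ODE argument for $G$ quantitative in the collapsed setting: one needs a uniform rate of decay for $R$ on $\Sigma_s$ as $s\to\infty$ in order to control the error term. Here the uniform Sobolev and Nash-entropy bounds provided by Theorem~\ref{nu-functional} are essential, allowing collapsed examples such as Lai's flying wings to be handled on equal footing with the Bryant soliton.
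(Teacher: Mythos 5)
Your lower bound (via \cite[Corollary 1.7]{CMM16} plus positive sectional curvature) and your general setup for the upper bound (unique critical point of $f$, spherical level sets, the Gauss equation $K_{\Sigma_s}=(R/2-\Ric(\nu,\nu))+\det(A_s)$, Gauss--Bonnet, coarea) agree with the paper. The genuine gap is in how you close the upper bound. Your differential inequality for $G(s)=\int_{\Sigma_s}R\,dA$ only works if the error term $\int_{\Sigma_s}\frac{R(R-\Ric(\nu,\nu))}{1-R}\,dA$ is $o(G(s))$, and for that you assert that $R\to 0$ at infinity ``by an asymptotic argument.'' Monotonicity of $R$ along integral curves of $\nabla f$ gives no such decay, and no decay is available in the collapsed setting the theorem is meant to cover: along the edge regions of Lai's flying wings, which are modelled on cigar$\times\mathbb{R}$, the scalar curvature does not tend to $0$ (nor does $|\nabla f|\to 1$). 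What one can actually extract from $\Ric>0$, $\nabla R=-2\Ric(\nabla f)$ and compactness of a single level set $\Gamma_{s_0}$ is only $R\le 1-\varepsilon_0$, i.e.\ $|\nabla f|^2\ge\varepsilon_0$, on $\{f\ge s_0\}$. With only this, $R/(1-R)$ is merely bounded by $(1-\varepsilon_0)/\varepsilon_0$, the error term is bounded by a (possibly large) multiple of $\int_{\Sigma_s}(R-\Ric(\nu,\nu))\,dA$, and your ODE degenerates to $G'\le 8\pi+(C-1)G$, which permits exponential growth; the argument does not close. Your proposed rescue --- importing uniform decay/non-collapsing from Theorem \ref{nu-functional} --- is not permissible either: Theorem \ref{steady 3d vol} assumes no Nash entropy bound, the flying wings are collapsed so that route would exclude exactly the examples the theorem is intended to reach, and the paper emphasizes that no non-collapsing condition is used here.

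The missing idea, and the way the paper's proof closes this step using only $R\le 1-\varepsilon_0$, is to bound the area growth directly rather than run an ODE for $\int_{\Sigma_s}R$. With $\psi_s$ the flow of $\nabla f/|\nabla f|^2$ and $dh_s$ the pulled-back area form on $\Gamma_{s_0}$, one has $\partial_s\,dh_s=\psi_s^*\big((R-\Ric(\nu,\nu))/|\nabla f|^2\big)\,dh_s\le \varepsilon_0^{-1}\,\psi_s^*\big(2K_s+\Ric(\nu,\nu)\big)\,dh_s$ by the Gauss equation; the troublesome $\Ric(\nu,\nu)$ term is an exact $s$-derivative, $\psi_s^*\Ric(\nu,\nu)=-\tfrac12\partial_s\big(R\circ\psi_s\big)$, so it is absorbed into the uniformly bounded weight $e^{R\circ\psi_s/(2\varepsilon_0)}$ (bounded because $0\le R\le 1$), and Gauss--Bonnet bounds the remaining term: $\frac{d}{ds}\int_{\Gamma_{s_0}}e^{R\circ\psi_s/(2\varepsilon_0)}\,dh_s\le \varepsilon_0^{-1}e^{1/(2\varepsilon_0)}\int_{\Gamma_s}2K_s=8\pi\varepsilon_0^{-1}e^{1/(2\varepsilon_0)}$. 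This gives $\mathrm{Area}(\Gamma_s)\le Cs$ with no decay assumption on $R$; your coarea step (with $|\nabla f|\ge\sqrt{\varepsilon_0}$) and the $1$-Lipschitz bound $f\le \dist(\cdot,o)+c_0$ then finish as you indicated. So what is needed is this weighting/absorption trick (or some substitute using only $|\nabla f|^2\ge\varepsilon_0$ at infinity), not any appeal to Sobolev or Nash entropy bounds.
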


\textbf{Acknowledgement.}
The first-named author was partially supported by an AMS–Simons Travel Grant and would like to thank Professor Jiaping Wang for fruitful discussions on the volume of steady solitons. The second-named author would like to thank Yuxing Deng for very enlightening discussions about steady solitons during his stay in La Jolla. 

\section{Proof of Theorem 1.1}
For any sequence $\tau_i\to \infty,$ there is a subsequence of
$g_i(t):=\tau_i^{-1}g(\tau_i t)$ which has an $\mathbb{F}$-limit in sense of \cite{Bam20b}, i.e.,
\begin{eqnarray}\label{F limit assumption}
\left((M,g_i(t))_{t\in(-\infty,0]},(\nu^i_t)_{t\in(-\infty,0]}\right)\xrightarrow{\makebox[1cm]{$\mathbb{F},\mathfrak{C}$}}\left(\mathcal{X}^{\infty},(\nu^{\infty}_t)_{t\in (-\infty,0)}\right) \quad\text{  as } \quad i\to \infty,
\end{eqnarray}
where $\nu^i_t=\nu_{p_0, 0 \,|\,\tau_i t}$ is the conjugate heat kernel, $\left(\mathcal{X}^{\infty},(\nu^{\infty}_t)_{t\in (-\infty,0)}\right)$ is a metric flow pair over $(-\infty,0]$, and $\mathfrak{C}$ is a correspondence (c.f. \cite[Definition 5.4]{Bam20b}). Note that this fact is true assuming only (\ref{curvaturebound}) and (\ref{nashuniformbound}), because it follows from \cite[Theorem 7.8]{Bam20b}, and does not depend on \cite{Bam20c}. According to our assumptions at the beginning of section 1, we shall then fix a sequence $\{\tau_i\}_{i=1}^\infty$, such that \cite{Bam20c} is valid for the sequence in (\ref{thefirstconvergence}).

\begin{Theorem}(\cite{Bam20c}) \label{thefirstconvergence}
Under the assumptions indicated at the begining of section 1, $\left(\mathcal{X}^{\infty},(\nu^{\infty}_t)_{t\in (-\infty,0)}\right)$ is a metric soliton and $\mathcal{X}^\infty_0=\{x^\infty\}$ consists of a singular point. Furthermore, we have $d\nu^\infty_t=d\nu_{x^\infty\,|\,t}$ and 
\begin{eqnarray}\label{convergenceofthenashentropy}
\lim_{i\rightarrow\infty}\mathcal{N}_{p_0,0}^{g_i}(\tau)=\mathcal N_{x^\infty}(\tau):=\int_{\mathcal{R}^\infty}f^\infty(\cdot,-\tau)d\nu^{\infty}_{-\tau}-\frac{n}{2}\quad\text{ for all }\quad \tau>0,
\end{eqnarray}
where $\mathcal N_{p_0,0}^{g_i}(\tau)$ is the Nash entrpy of the Ricci flow $g_i$, $\mathcal{R}^\infty$ is the regular part of $\mathcal{X}^\infty$, $f^\infty$ is a smooth function on $\mathcal{R}^\infty$ defined as $d\nu^\infty_t:=(4\pi|t|)^{-\frac{n}{2}}e^{-f^\infty(\cdot,t)}dg_{\infty,t}$ on $\mathcal{R}_{\infty}$, and $\nu^\infty_t(\mathcal{X}^\infty\setminus\mathcal{R}^\infty)=0$ for all $t<0$. The convergence in (\ref{F limit assumption}) is smooth on $\mathcal{R}^\infty$ in the sense of \cite[Theorem 9.31]{Bam20b}.
\end{Theorem}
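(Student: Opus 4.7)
The plan is to assemble the stated conclusions directly from Bamler's theory rather than attempt independent proofs, since the theorem is explicitly attributed to \cite{Bam20c}. First, the $\mathbb{F}$-subsequential limit of the rescaled sequence $\{(M, g_i(t), \nu^i_t)\}$ already exists by Theorem 7.8 of \cite{Bam20b}, which depends only on the uniform Nash entropy bound \eqref{nashuniformbound} and not on any content of \cite{Bam20c}; this produces the pair $(\mathcal{X}^\infty, (\nu^\infty_t))$ and the correspondence $\mathfrak{C}$ as indicated in \eqref{F limit assumption}. Our freedom in choosing $\{\tau_i\}$ at the start of Section~1 ensures that the theorems of \cite{Bam20c} apply to this particular sequence.

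Next, I would invoke the soliton structure for tangent flows at infinity established in \cite[Theorem 1.41]{Bam20c}. The heart of the argument is that Bamler's Nash entropy $\mathcal{N}^{g_i}_{p_0,0}(\tau)$ is monotone non-increasing in $\tau$ and bounded below by $-Y$; under the parabolic rescaling $t \mapsto \tau_i t$ with $\tau_i \to \infty$, the entropy at a fixed $\tau$ approaches the limiting value $\mu_\infty := \inf_{\tau>0} \mathcal{N}_{p_0,0}(\tau)$, so the entropy becomes effectively constant in $\tau$ in the limit. The rigidity (equality) case of Bamler's monotonicity formula then forces $\mathcal{X}^\infty$ to be a metric soliton, while the fact that $\mathcal{X}^\infty_0 = \{x^\infty\}$ comes from the collapse of the time-$0$ slice under the aggressive parabolic rescaling, together with the soliton self-similarity.

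The identification $d\nu^\infty_t = d\nu_{x^\infty\,|\,t}$ and the integral formula \eqref{convergenceofthenashentropy} are then consequences of Bamler's continuity of the Nash entropy under $\mathbb{F}$-convergence (\cite[Theorem 1.6]{Bam20c}). Passing the Gaussian density formula $d\nu^\infty_t = (4\pi|t|)^{-n/2} e^{-f^\infty} dg_{\infty,t}$ to the limit on the regular part $\mathcal{R}^\infty$, and using that the singular set is $\nu^\infty_t$-null, gives the stated integral representation of $\mathcal{N}_{x^\infty}(\tau)$. Finally, the smooth convergence on $\mathcal{R}^\infty$ is precisely the content of \cite[Theorem 9.31]{Bam20b}, which upgrades $\mathbb{F}$-convergence to $C^\infty_{\mathrm{loc}}$ convergence on compact subsets of the regular part via Bamler's $\varepsilon$-regularity.

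The principal obstacle, were one to reprove this theorem from scratch, is the full partial-regularity theory underlying \cite{Bam20b,Bam20c}: the codimension bound on the singular set, the identification of $\mathcal{R}^\infty$ with a smooth Ricci flow spacetime, the rigidity characterizing solitons by constancy of Nash entropy, and the upgrade from weak $\mathbb{F}$-convergence to smooth convergence on regular subsets. These occupy the bulk of Bamler's two papers, so the only sensible course here is to cite them as a black box, as the authors of this note do.
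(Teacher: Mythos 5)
Your proposal is correct and follows essentially the same route as the paper: both treat the theorem as an assembly of Bamler's results, the decisive observation being that the monotone, uniformly bounded Nash entropy becomes constant in $\tau$ in the limit, so the rigidity case of the entropy monotonicity (the paper cites \cite[Theorem 1.19]{Bam20c}) gives the metric soliton structure. The only divergence is citation bookkeeping: the paper attributes the null singular set, the smooth convergence on $\mathcal{R}^\infty$, and the Nash entropy convergence to \cite[Theorems 1.4, 1.6, 1.15]{Bam20c} respectively, rather than your mix of \cite[Theorem 1.6]{Bam20c} and \cite[Theorem 9.31]{Bam20b}, but this does not affect the substance.
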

\begin{proof}
By \cite[Theorem 1.4]{Bam20c}, the singular part $\mathcal{X}^\infty\setminus\mathcal{R}^\infty$ always has zero measure. By \cite[Theorem 1.6]{Bam20c}, the convergence in (\ref{F limit assumption}) is smooth on $\mathcal{R}^\infty$. The convergence of the Nash entropy (\ref{convergenceofthenashentropy}) follows from \cite[Theorem 1.15]{Bam20c}. Since the Nash entropy is monotonically decreasing in $\tau$, we have that
$$\mathcal N_{x^\infty}(\tau)=\lim_{i\rightarrow\infty}\mathcal{N}_{p_0,0}^{g_i}(\tau)=\lim_{i\rightarrow\infty}\mathcal{N}^g_{p_0,0}(\tau_i\tau)$$
is a constant independent of $\tau$. It then follows from \cite[Theorem 1.19]{Bam20c} that $\left(\mathcal{X}^{\infty},(\nu^{\infty}_t)_{t\in (-\infty,0)}\right)$ is a metric soliton.
\end{proof}

Next, to show that the $\nu$-functional is uniformly bounded on $(M,g(t))_{t\in(-\infty,0]}$, we shall use the same method as we have applied in \cite[Section 9]{CMZ21}. Let us arbitrarily fix $t_0\in(-\infty,0]$, $\tau_0>0$, and $u_0$ satisfying $u_0\geq 0$, $\sqrt{u_0}\in C_0^\infty(M)$, and $\int_M u_0 dg_{t_0}=1$. We shall estimate $$\overline{\W}(g(t_0),u_0,\tau_0):=\int_M\left(\tau_0\left(\frac{|\nabla u_0|^2}{u_0^2}+R_{g_{t_0}}\right)-\log u_0-\frac{n}{2}\log(4\pi\tau_0)-n\right)u_0\,dg_{t_0}.$$ As in \cite[Section 9]{CMZ21}, we shall, without loss of generality, assume $t_0<0$. 

Let us solve the conjugate heat equation coupled with $(M,g(t))_{t\in(-\infty,t_0]}$, with $u_0$ being its initial value. Then the solution is
\begin{eqnarray*}
u(x,t):=\int_M K(\cdot,t_0\,|\,x,t)u_0\,dg_{t_0}\quad\text{ for all }\quad (x,t)\in M\times(-\infty,t_0].
\end{eqnarray*}
By the fact that the conjugate heat equation preserves the integral, we also have that $$d\mu_t:=u(\cdot,t)\,dg_t\quad\text{ for all }\quad t\leq t_0$$ is a conjugate heat flow. Our first observation is the following.

\begin{Lemma}\label{theW1distancebound}
We have
\begin{eqnarray}
\sup_{t\in(-\infty,\tau_i^{-1}t_0]}\dist_{W_1}^{g^i_{t}}(\mu^i_t,\nu^i_t)\leq\frac{C}{\sqrt{\tau_i}},
\end{eqnarray}
where $\mu^i_t:=\mu_{\tau_i t}$, $\nu^i_t=\nu_{p_0, 0 \,|\,\tau_i t}$, $\dist_{W_1}$ is the $1$-Wassernstein distance, and $C$ is a constant depending on $u_0$ but independent of $i$.
\end{Lemma}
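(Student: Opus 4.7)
The plan is to combine Bamler's monotonicity of the $W_1$-Wasserstein distance between two conjugate heat flows on a Ricci flow background (cf.\ \cite{Bam20a, Bam20b}) with the scaling of distances under the parabolic rescaling $g_i(t) = \tau_i^{-1}g(\tau_i t)$. The key observation is that both $(\mu_t)_{t \leq t_0}$ and $(\nu_t = \nu_{p_0, 0\,|\,t})_{t \leq 0}$ are conjugate heat flows with respect to the unrescaled flow $g(t)$, so monotonicity reduces the whole family of distances to be bounded to a single quantity evaluated at the latest common time $t = t_0$, after which the parabolic scaling produces the $1/\sqrt{\tau_i}$ factor.

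First I would invoke the $W_1$-monotonicity for conjugate heat flows: it implies that $t \mapsto \dist_{W_1}^{g_t}(\mu_t, \nu_t)$ is non-increasing as $t$ decreases, so
\[
\sup_{t \leq t_0} \dist_{W_1}^{g_t}(\mu_t, \nu_t) \;=\; \dist_{W_1}^{g_{t_0}}(\mu_{t_0}, \nu_{t_0}) \;=:\; C_0.
\]
Next I would show $C_0 < \infty$. Picking any $q_0 \in \spt(u_0)$ and applying the triangle inequality for $W_1$,
\[
C_0 \;\leq\; \dist_{W_1}^{g_{t_0}}(\mu_{t_0}, \delta_{q_0}) + \dist_{W_1}^{g_{t_0}}(\delta_{q_0}, \nu_{t_0}) \;\leq\; \mathrm{diam}_{g_{t_0}}(\spt u_0) + \int_M d_{g_{t_0}}(q_0, x)\, d\nu_{t_0}(x).
\]
The first term is finite because $u_0$ has compact support; the second is finite thanks to the Gaussian-type decay of the conjugate heat kernel $\nu_{t_0}$, which follows from the bounded-curvature assumption (\ref{curvaturebound}) on the compact interval $[t_0, 0]$. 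Hence $C_0$ depends only on the fixed data $(u_0, t_0, p_0)$, not on $i$.

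Finally I would translate back to the rescaled flows. Since $d_{g_i(t)} = \tau_i^{-1/2}\, d_{g(\tau_i t)}$ and $\mu^i_t = \mu_{\tau_i t}$, $\nu^i_t = \nu_{\tau_i t}$ as measures on $M$, the $W_1$-distance scales cleanly: for $t \leq \tau_i^{-1} t_0$ (equivalently $\tau_i t \leq t_0$),
\[
\dist_{W_1}^{g^i_t}(\mu^i_t, \nu^i_t) \;=\; \tau_i^{-1/2}\, \dist_{W_1}^{g_{\tau_i t}}(\mu_{\tau_i t}, \nu_{\tau_i t}) \;\leq\; \tau_i^{-1/2}\, C_0,
\]
where the last inequality uses the monotonicity from Step 1 applied at time $\tau_i t \leq t_0$. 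Setting $C := C_0$ gives the claim.

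The only potentially delicate ingredient is the monotonicity statement invoked in Step 1; this is a foundational result within Bamler's framework and should be cited directly. Everything else is just the triangle inequality for $W_1$, a standard conjugate-heat-kernel decay estimate on a fixed compact time interval, and the parabolic scaling of the metric, and the resulting constant $C$ is manifestly independent of $i$ because it is computed entirely from the unrescaled data.
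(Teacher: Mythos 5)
Your proposal is correct and follows essentially the same route as the paper: Bamler's $W_1$-monotonicity for two conjugate heat flows reduces everything to the single distance $\dist_{W_1}^{g_{t_0}}(\mu_{t_0},\nu_{t_0})$, whose finiteness comes from the compact support of $u_0$ together with a Gaussian-type first-moment bound on the conjugate heat kernel (available thanks to (\ref{curvaturebound})), and the factor $\tau_i^{-1/2}$ is just the parabolic scaling of distances. The only cosmetic difference is that you bound the distance at $t_0$ by the triangle inequality through a Dirac mass, while the paper uses Kantorovich--Rubinstein duality with $1$-Lipschitz test functions; both hinge on the same heat-kernel moment estimate.
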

\begin{proof}

Let $\nu_t=\nu_{p_0, 0\,|\,t}$. By \cite[Proposition 3.24(b)]{Bam20b}, for all $t\leq t_0$, we have
\begin{equation}\label{W1 dist bdd1}
    \dist_{W_1}^{g_t}(\mu_t, \nu_t)\leq  \dist_{W_1}^{g_{t_0}}(\mu_{t_0}, \nu_{t_0}).
\end{equation}
Hence, we need only to estimate the right-hand-side of (\ref{W1 dist bdd1}). Let us fix a large positive constant $D_0$ such that $\spt u_0\subseteq B_{t_0}(p_0, D_0)$. Let $\phi$ be an arbitrary bounded $1$-Lipschitz function with respect to the metric $\dist_{g_{t_0}}$, then we may compute
\begin{eqnarray*}
\int_M \phi\, d\mu_{t_0}  - \int_M \phi\, d\nu_{t_0}&=& \int_M (\phi-\phi(p_0))\, d\mu_{t_0} - \int_M (\phi-\phi(p_0))\, d\nu_{t_0}\\
&\leq& D_0 \int_M u_{t_0}dg_{t_0}- \int_M (\phi-\phi(p_0))K(p_0, 0|\cdot, t_0)\, dg_{t_0}\\
&\leq& D_0 +\int_M \dist_{t_0}(\cdot, p_0)K(p_0, 0|\cdot, t_0)\, dg_{t_0}\\
&\leq& C,
\end{eqnarray*}
where we have used the Gaussian upper bound in \cite[Theorem 26.25]{RFV3} and (\ref{curvaturebound}) in the last inequality. Here $C$ is a constant depending only on the curvature bound on $M\times[t_0,0]$ and the lower bound of $\Vol_{g_0}\big(B_{g_0}(p_0,1)\big)$. It follows from the Kantorovich-Rubinstein Theorem that 
\begin{equation}\label{W1 dist bdd2}
\begin{split}
    \dist_{W_1}^{g_{t_0}}(\mu_{t_0}, \nu_{t_0})
    &= \sup_{\phi}\left( \int_M \phi\, d\mu_{t_0} 
    - \int_M \phi\, d\nu_{t_0}\right)\\
    &\leq C,
    \end{split}
\end{equation}
where the supremum is taken over all bounded $1$-Lipschitz functions $\phi$ with respect to the metric $\dist_{g_{t_0}}$. 

Finally, let us arbitrarily fix $i\in\mathbb{N}$ and $t\in(-\infty,\tau_i^{-1}t_0]$. Recall that the $W_1$-Wassernstein distance between two probability measures $\mu, \nu\in \mathcal{P}(X)$, where $X$ is a metric space, is defined as $$\dist_{W_1}(\mu,\nu)=\inf_{q}\int_{X\times X} \dist(x,y)\, dq(x,y),$$
where the infimum is taken over all couplings $q\in\mathcal{P}(X\times X)$ of $\mu$ and $\nu$. Hence, using (\ref{W1 dist bdd1}) and (\ref{W1 dist bdd2}), we can pick a coupling $q$ of $\mu^i_t(=\mu_{\tau_it})$ and $\nu^i_t(=\nu_{\tau_it})$ such that 
\begin{equation*}
\int_{M\times M}\dist_{g_{\tau_i t}}(x, y)\,dq(x,y)\leq 2C.
\end{equation*}
Therefore, we have
\begin{eqnarray*}
\dist_{W_1}^{g_{i, t}}(\mu^i_t, \nu^i_t)&\leq& \int_{M\times M}\dist_{g_{i, t}}(x, y)\,dq(x,y)\\
&=&\frac{1}{\sqrt{\tau_i}}\int_{M\times M}\dist_{g_{\tau_i t}}(x, y)\,dq(x,y)\\
&\leq&\frac{2C}{\sqrt{\tau_i}}.
\end{eqnarray*}
\end{proof}

The next observation follows from the exactly same reasoning as \cite[Proposition 9.5]{CMZ21}

\begin{Lemma}\label{upperbound}
There is a constant $C_0$, depending on $p_0$ and the function $u_0$, such that
\begin{eqnarray*}
u(x,t)\leq C_0K(p_0,0\,|\,x,t) \quad\text{ for all }\quad (x,t)\in M\times(-\infty,t_0).
\end{eqnarray*}
\end{Lemma}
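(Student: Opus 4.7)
The plan is to dominate $u_0$ pointwise by a constant multiple of the conjugate heat kernel at time $t_0$, and then propagate this domination to all earlier times via the Chapman--Kolmogorov (reproduction) identity for the conjugate heat kernel; this is the same scheme as in \cite[Proposition 9.5]{CMZ21}, to which the authors already allude.

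First I would record the basic properties of $u_0$: because $\sqrt{u_0}\in C_0^\infty(M)$, the function $u_0$ is smooth, nonnegative, bounded by $A:=\sup_M u_0$, and supported in some ball $B:=B_{g_{t_0}}(p_0, D_0)$, whose closure is compact by completeness of $(M,g(t_0))$. The conjugate heat kernel $K(p_0,0\,|\,\cdot,t_0)$ is smooth and strictly positive on $M$ (positivity following from the strong maximum principle applied to the conjugate heat equation), so $c_0 := \inf_{y\in\overline{B}} K(p_0,0\,|\,y,t_0) > 0$. Setting $C_0 := A/c_0$, I would then have the pointwise bound $u_0(y)\leq C_0\, K(p_0,0\,|\,y,t_0)$ for every $y\in M$: trivially where $u_0$ vanishes, and by construction on $B$. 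Notice that $C_0$ depends on $u_0$ through $A$ and $D_0$, and on $p_0$ through $c_0$, in accordance with the statement.

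Next I would invoke the reproduction formula
$$K(p_0,0\,|\,x,t)=\int_M K(p_0,0\,|\,y,t_0)\,K(y,t_0\,|\,x,t)\,dg_{t_0}(y),$$
valid for all $t<t_0<0$ under assumption (\ref{curvaturebound}). Substituting the pointwise bound above into the identity $u(x,t)=\int_M K(y,t_0\,|\,x,t)\,u_0(y)\,dg_{t_0}(y)$ immediately yields
$$u(x,t)\leq C_0\int_M K(y,t_0\,|\,x,t)\,K(p_0,0\,|\,y,t_0)\,dg_{t_0}(y)=C_0\,K(p_0,0\,|\,x,t),$$
which is the conclusion of the lemma. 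The argument is essentially routine and there is no substantial obstacle: the positive infimum $c_0>0$ is a soft consequence of smooth positivity of the kernel and compactness of $\overline{B}$ (no Gaussian lower bound is required), and the reproduction formula is a standard property of the conjugate heat kernel on Ricci flows that are complete with curvature bounded on compact time intervals.
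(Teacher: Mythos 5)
Your proof is correct and is essentially the same argument the paper relies on: the paper simply defers to \cite[Proposition 9.5]{CMZ21}, whose reasoning is exactly your two steps, namely dominating $u_0$ by $C_0\,K(p_0,0\,|\,\cdot,t_0)$ using boundedness, compact support, and the strict positivity of the kernel at time $t_0$, and then propagating this bound to all earlier times via the semigroup (reproduction) identity for the conjugate heat kernel, which is legitimate here thanks to the completeness and the curvature bound (\ref{curvaturebound}).
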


For the convenience of the proof, we shall next fix some notations. Let
\begin{eqnarray*}
d\mu_t&:=&(4\pi|t|)^{-\frac{n}{2}}e^{-\bar f(\cdot,t)}dg_t\quad\text{ for }\quad t< t_0,
\\
d\mu^i_t&:=&(4\pi|t|)^{-\frac{n}{2}}e^{-\bar f^i(\cdot,t)}dg_t\quad\text{ for }\quad t< \tau_i^{-1}t_0,
\\
d\nu_t&:=&(4\pi|t|)^{-\frac{n}{2}}e^{ -f_{p_0,0}(\cdot,t)}dg_t\quad\text{ for }\quad t< 0,
\\
d\nu^i_t&:=&(4\pi|t|)^{-\frac{n}{2}}e^{ -f^i_{p_0,0}(\cdot,t)}dg_t\quad\text{ for }\quad t< 0,
\end{eqnarray*}
and we define
\begin{eqnarray*}
\N_{p_0,0}^i(\tau)&:=&\N_{p_0,0}^{g_i}(\tau)=\int_M f^i_{p_0,0}(\cdot,-\tau)d\nu^i_{-\tau}-\frac{n}{2}\quad\text{ for }\quad \tau>0,
\\
\overline{\N}^i(\tau)&:=&\int_M \bar f^i_{p_0,0}(\cdot,-\tau)d\mu^i_{-\tau}-\frac{n}{2}\quad\text{ for }\quad \tau>\tau_i^{-1}|t_0|.
\end{eqnarray*}

\begin{Lemma}
We have
\begin{eqnarray}\label{thesecondFconvergence}
\left((M,g_i(t))_{t\in(-\infty,\tau_i^{-1}t_0]},(\mu^i_t)_{t\in(-\infty,\tau_i^{-1}t_0]}\right)\xrightarrow{\makebox[1cm]{$\mathbb{F},\mathfrak{C}$}}\left(\mathcal{X}^{\infty},(\nu^{\infty}_t)_{t\in (-\infty,0)}\right)\quad \text{  as } \quad i\to \infty,
\end{eqnarray}
where $\mathfrak{C}$ is the same correspondence as in (\ref{F limit assumption}). The convergence is smooth on $\mathcal{R}^\infty$ in the sense of \cite[Theorem 9.31]{Bam20b}, and in particular, we have
\begin{eqnarray}\label{convergenceoff}
f^i_{p_0,0}\rightarrow f^\infty,\quad \bar f^i\rightarrow f^\infty
\end{eqnarray} locally smoothly on $\mathcal{R}^\infty$. Here both $f^\infty$ and $\mathcal{R}^\infty$ are defined in the statement of Theorem \ref{thefirstconvergence}, and, in the convergence, $f^i_{p_0,0}$ and $\bar f^i$ should be understood as being pulled back by the diffeomorphisms provided by \cite[Theorem 9.31]{Bam20b}.
\end{Lemma}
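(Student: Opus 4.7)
The plan is to establish three assertions in turn: the $\mathbb{F}$-convergence in (\ref{thesecondFconvergence}) under the \emph{same} correspondence $\mathfrak{C}$ as in (\ref{F limit assumption}), the smoothness of that convergence on $\mathcal{R}^\infty$, and the local smooth convergence $\bar f^i \to f^\infty$ there. Note that $f^i_{p_0,0} \to f^\infty$ smoothly on $\mathcal{R}^\infty$ is already contained in Theorem \ref{thefirstconvergence}, so the genuinely new input is promoting every one of these statements from $\nu^i$ to $\mu^i$.

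First, for the $\mathbb{F}$-convergence I would argue by a triangle inequality for the $\mathbb{F}$-distance. Since (\ref{F limit assumption}) already supplies $(\nu^i_t) \xrightarrow{\mathbb{F},\mathfrak{C}} (\nu^\infty_t)$, it suffices to show that, under the trivial identity correspondence on the common metric flow $(M, g_i(t))$, the $\mathbb{F}$-distance between $(\mu^i_t)$ and $(\nu^i_t)$ tends to $0$ as $i\to\infty$. By the definition of $d_{\mathbb{F}}$ from \cite{Bam20b}, this reduces to bounding the time-supremum of the Wasserstein-$1$ distances $\dist^{g_{i,t}}_{W_1}(\mu^i_t, \nu^i_t)$, and Lemma \ref{theW1distancebound} supplies exactly the required estimate $C/\sqrt{\tau_i} \to 0$. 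Concatenating correspondences through the identity on $(M, g_i(t))$ then preserves $\mathfrak{C}$ in the limit.

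Next, for the smooth convergence of $\bar f^i$ on compact subsets of $\mathcal{R}^\infty$, I would combine Lemma \ref{upperbound} with parabolic regularity. In the rescaled picture, the pointwise bound $u \le C_0 K(p_0,0\mid\cdot,\cdot)$ translates to $\bar f^i \ge f^i_{p_0,0} - \log C_0$, so since $f^i_{p_0,0} \to f^\infty$ locally smoothly on $\mathcal{R}^\infty$, we obtain a uniform upper bound on the rescaled density $e^{-\bar f^i}$ on every compact subset $K$ of $\mathcal{R}^\infty$. Because $u$ solves the conjugate heat equation under $g_i(t)$, and the metric coefficients converge smoothly via the charts of \cite[Theorem 9.31]{Bam20b}, standard parabolic Schauder estimates then upgrade this $L^\infty$ control to uniform $C^k_{\text{loc}}$ bounds for $\bar f^i$ on $K$. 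Arzelà-Ascoli extracts smooth subsequential limits, which must coincide with $f^\infty$ by the $\mathbb{F}$-convergence already established in the previous step, and this identification of the limit forces full smooth convergence of the whole sequence.

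The main obstacle I anticipate is threading the parabolic regularity argument through Bamler's charts carefully: the Schauder estimates need a solution on a full parabolic neighborhood inside $\mathcal{R}^\infty$, so one must pull $u$ back by the converging diffeomorphisms and verify that the resulting parabolic problem has uniformly bounded coefficients in $C^\infty$. This is exactly the device used in \cite{Bam20c} to propagate smoothness of $K$ itself across the regular part, so only the bookkeeping is new, but one must also stay uniformly away from the singular slice $\mathcal{X}^\infty_0$ and from the top time $t = \tau_i^{-1}t_0$ as $i\to\infty$. Both restrictions are harmless because $(\nu^\infty_t)$ is only considered on $(-\infty,0)$, and $\tau_i^{-1}t_0 \to 0^-$.
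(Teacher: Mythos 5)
Your proposal is correct in substance, and its first half coincides with the paper's argument: the paper also obtains \eqref{thesecondFconvergence} by feeding the uniform $W_1$-bound of Lemma \ref{theW1distancebound} into \cite[Lemma 5.19]{Bam20b}, which is exactly your ``triangle inequality for $d_{\mathbb{F}}$ over the common flow $(M,g_i(t))$'' packaged as a citation. Where you diverge is the smoothness statement \eqref{convergenceoff}: the paper stays entirely inside Bamler's machinery, first invoking \cite[Lemma 6.17]{Bam20b} to get convergence of the conjugate heat flows $(\mu^i_t)$ to $(\nu^\infty_t)$ within the correspondence, and then \cite[Theorem 9.31(f)]{Bam20b} (together with \cite[Theorem 1.6]{Bam20c}, which makes the convergence of the flows smooth on $\mathcal{R}^\infty$) to conclude that the densities $\bar f^i$ converge locally smoothly; no a priori bound on $u$ is needed at this stage, and Lemma \ref{upperbound} is reserved for the next lemma. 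You instead re-prove the local smooth convergence by hand: the bound $\bar f^i\ge f^i_{p_0,0}-\log C_0$ from Lemma \ref{upperbound}, combined with the smooth convergence $f^i_{p_0,0}\to f^\infty$ already contained in Theorem \ref{thefirstconvergence}, gives locally uniform $L^\infty$ control of the density, and interior parabolic estimates in Bamler's charts plus Arzel\`a--Ascoli give smooth subsequential limits. This is a legitimate and more self-contained route, but note that your final identification step (``the limit must coincide with $f^\infty$ by the $\mathbb{F}$-convergence'') is precisely where the weak convergence of $\mu^i_t$ to $\nu^\infty_t$ compatibly with the charts has to be extracted from the correspondence; making that precise essentially amounts to invoking \cite[Lemma 6.17]{Bam20b}, i.e.\ the very lemma the paper cites, so you should state it explicitly rather than leave it implicit. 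In short: the paper's proof is shorter and purely citation-based, while yours trades an extra hypothesis (Lemma \ref{upperbound}) and some PDE bookkeeping for a hands-on explanation of why the convergence is smooth.
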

\begin{proof}

Combining Lemma 2.2 and \cite[Lemma 5.19]{Bam20b}, we obtain (\ref{thesecondFconvergence}). By \cite[Lemma 6.17]{Bam20b}, we then have
\begin{eqnarray}\label{CHFconvergence}
(\mu^i_t)_{t\in(-\infty,\tau_i^{-1}t_0]}\xrightarrow{\makebox[1cm]{$\mathfrak{C}$}}(\nu^{\infty}_t)_{t\in (-\infty,0)}\quad \text{  as } \quad i\to \infty,
\end{eqnarray}
where the convergence is in the sense of \cite[Definition 6.14]{Bam20b}. By \cite[Theorem 1.6]{Bam20c}, we have that the convergence in (\ref{thefirstconvergence}) is smooth on $\mathcal{R}^\infty$. Combining this fact with (\ref{CHFconvergence}) and applying \cite[Theorem 9.31(f)]{Bam20c}, we have that the convergence of $\mu_t^i$ is also locally smooth on $\mathcal{R}^\infty$. Hence, the convergence in (\ref{thesecondFconvergence}) is smooth on $\mathcal{R}^\infty$. This finishes the proof of the lemma.

\end{proof}

\begin{Lemma}
We have
\begin{eqnarray}
\lim_{i\rightarrow\infty}\N^i_{p_0,0}(\tau)=\lim_{i\rightarrow\infty} \overline{\N}^i(\tau)\equiv\N_{x^\infty}(\tau)\quad \text{ for all }\quad \tau>0.
\end{eqnarray}
All the terms above are constants independent of $\tau>0$.
\end{Lemma}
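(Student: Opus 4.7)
The plan is to reduce the lemma to showing $\overline{\N}^i(\tau) - \N^i_{p_0,0}(\tau) \to 0$ as $i \to \infty$, since Theorem \ref{thefirstconvergence} already gives $\N^i_{p_0,0}(\tau) \to \N_{x^\infty}(\tau)$ and shows that $\N_{x^\infty}(\tau)$ is independent of $\tau$ via the metric soliton property.

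To set up this difference, I would let $K^i$ denote the conjugate heat kernel of $g_i$ based at $(p_0,0)$ (so that $d\nu^i_{-\tau} = K^i\,d g_{i,-\tau}$) and set $v^i := u^i/K^i$. Then $\bar f^i - f^i_{p_0,0} = -\log v^i$, and a short calculation yields the algebraic identity
\[
\overline{\N}^i(\tau) - \N^i_{p_0,0}(\tau) = -\int_M v^i \log v^i \, d\nu^i_{-\tau} + \int_M f^i_{p_0,0}\,(d\mu^i_{-\tau} - d\nu^i_{-\tau}),
\]
in which the first term is the negative relative entropy $-H(\mu^i_{-\tau}\,|\,\nu^i_{-\tau})$. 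I would then show that each term vanishes in the limit.

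For the relative entropy, Lemma \ref{upperbound} gives $0 < v^i \le C_0$ with $\int v^i\,d\nu^i_{-\tau} = 1$, while combining the smooth convergences $\bar f^i \to f^\infty$ from Lemma 2.5 and $f^i_{p_0,0} \to f^\infty$ from Theorem \ref{thefirstconvergence} forces $v^i \to 1$ locally smoothly on $\mathcal{R}^\infty$. Given $\varepsilon > 0$, I would choose a compact $K \subset \mathcal{R}^\infty_{-\tau}$ with $\nu^\infty_{-\tau}(\mathcal{R}^\infty_{-\tau} \setminus K) < \varepsilon$; via the convergence diffeomorphisms of \cite[Theorem 9.31]{Bam20b}, the lift of $v^i$ converges uniformly to $1$ on $K$, so the integral of $v^i \log v^i$ over the lifted region is $o(1)$, while the complementary region carries $\nu^i_{-\tau}$-mass at most $2\varepsilon$ for large $i$, on which $|v^i \log v^i| \le C_0 \log C_0$. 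Sending $\varepsilon \to 0$ yields $H(\mu^i_{-\tau}\,|\,\nu^i_{-\tau}) \to 0$.

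The second term rewrites as $\int_M f^i_{p_0,0}(v^i - 1)\,d\nu^i_{-\tau}$. The Gaussian upper bound for the conjugate heat kernel \cite[Theorem 26.25]{RFV3} gives $f^i_{p_0,0} \ge -C$ uniformly, so together with Theorem \ref{thefirstconvergence} the integrals $\int_M |f^i_{p_0,0}|\,d\nu^i_{-\tau}$ are uniformly bounded. On the lift of $K$ the uniform convergence $v^i \to 1$ gives $o(1)$; outside, the crude bound $|v^i - 1| \le C_0 + 1$ combines with the known convergence $\int_M f^i_{p_0,0}\,d\nu^i_{-\tau} \to \int_{\mathcal{R}^\infty} f^\infty\,d\nu^\infty$ and the compact-set convergence to bound the tail of $\int |f^i_{p_0,0}|\,d\nu^i_{-\tau}$ by a quantity that is small by choice of $K$. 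The hardest step will be making these tail estimates rigorous within Bamler's $\mathbb{F}$-convergence framework — specifically, transferring the concentration $\nu^\infty(\mathcal{X}^\infty \setminus \mathcal{R}^\infty) = 0$ into uniform-in-$i$ smallness of the $\nu^i$- and $\mu^i$-masses outside the lift of a compact piece of $\mathcal{R}^\infty$, leveraging the diffeomorphisms supplied by \cite[Theorem 9.31]{Bam20b} and the regularity statements of \cite[Theorems 1.4 and 1.6]{Bam20c}.
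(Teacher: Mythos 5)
Your overall strategy differs from the paper's in a genuine way: you estimate the difference $\overline{\mathcal N}^i(\tau)-\mathcal N^i_{p_0,0}(\tau)$ directly, splitting it into a relative entropy term and $\int f^i_{p_0,0}\,(d\mu^i_{-\tau}-d\nu^i_{-\tau})$, and then lean on the already-established convergence $\mathcal N^i_{p_0,0}(\tau)\to\mathcal N_{x^\infty}(\tau)$ from Theorem \ref{thefirstconvergence}. The paper never forms this difference: it proves the convergence of $\overline{\mathcal N}^i(\tau)$ by repeating Bamler's proof of \cite[Theorem 14.45(a)]{Bam20c} for the conjugate heat flow $(\mu^i_t)$, and the only new ingredient it supplies is the uniform bound (\ref{nonsense501}), $\int e^{-\frac12\bar f^i(\cdot,-\tau)}\,dg^i_{-\tau}\le C$, obtained from Lemma \ref{upperbound}, \cite[Proposition 5.5]{Bam20c} and $R\ge 0$; that bound is what provides the uniform tail integrability in Bamler's argument. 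Both routes thus rest on the same two pillars (the bound $u\le C_0K$ of Lemma \ref{upperbound} and the locally smooth convergence on $\mathcal R^\infty$ together with $\nu^\infty_{-\tau}(\mathcal X^\infty\setminus\mathcal R^\infty)=0$), but your decomposition is a legitimately different, and arguably more self-contained, way to organize the limit.

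There is, however, a concrete gap in your treatment of the second term: the uniform-in-$i$ lower bound $f^i_{p_0,0}(\cdot,-\tau)\ge -C$ does not follow from \cite[Theorem 26.25]{RFV3}. Since $f^i_{p_0,0}(\cdot,-\tau)=f_{p_0,0}(\cdot,-\tau_i\tau)$, such a bound must hold at times $-\tau_i\tau\to-\infty$, whereas the constants in \cite[Theorem 26.25]{RFV3} depend on the curvature bound over the relevant time interval, which under (\ref{curvaturebound}) is only finite on compact intervals and is neither uniform nor scale-invariant as $\tau_i\to\infty$ (this is also why the paper uses that theorem only at the fixed time $t_0$ in Lemma \ref{theW1distancebound}). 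The lower bound is not cosmetic for you: it is what converts control of the signed tail $\int f^i\,d\nu^i$ outside the lift of $K$ (obtained from the convergence of the full integral minus the compact piece) into control of $\int|f^i|\,d\nu^i$ there, and without it that step fails. The correct source is Bamler's scale-invariant conjugate heat kernel upper bound under the Nash entropy assumption (\ref{nashuniformbound}) (see \cite{Bam20a}), which gives $K(p_0,0\,|\,\cdot,-s)\le C(n,Y)(4\pi s)^{-n/2}$ for all $s>0$ and hence $f^i_{p_0,0}\ge -C(n,Y)$; alternatively, the paper's estimate (\ref{nonsense501}) (together with Lemma \ref{upperbound}) supplies the same uniform tail integrability and would let you bypass the issue. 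Finally, the step you yourself flag as hardest --- turning $\nu^\infty_{-\tau}(\mathcal X^\infty\setminus\mathcal R^\infty)=0$ into uniform smallness of the $\nu^i$- and $\mu^i$-masses outside compact lifts --- is exactly the part the paper does not redo but imports from the proof of \cite[Theorem 14.45(a)]{Bam20c}; you would need to either cite that argument in the same way or reproduce it.
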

\begin{proof}
Since the convergences in (\ref{thefirstconvergence}), (\ref{thesecondFconvergence}), and (\ref{convergenceoff}) are all smooth on $\mathcal{R}^\infty$, and since $\nu^\infty_t(\mathcal{X}^\infty\setminus\mathcal{R}^\infty)\equiv 0$, one may argue in the same way as the proof of \cite[Theorem 14.45(a)]{Bam20c}. The only statement we need to check is: fixing any $\tau>0$, there is a constant $C$ independent of $i$, such that
\begin{eqnarray}\label{nonsense501}
\int_M e^{-\tfrac{1}{2}\bar f^i(\cdot,-\tau)}dg_{i,-\tau_i}\leq C\quad\text{ for all $i$ large enough}.
\end{eqnarray}

We shall now prove this estimate. By Lemma \ref{upperbound}, we have
$$e^{-\tfrac{1}{2}\bar f^i(\cdot,-\tau)}\leq C_0^{\frac{1}{2}}e^{-\tfrac{1}{2} f^i_{p_0,0}(\cdot,-\tau)}.$$
Hence, \cite[Proposition 5.5]{Bam20c} implies that
\begin{align*}
    \int e^{-\tfrac{1}{2}\bar f^i(\cdot,-\tau)}dg^i_{-\tau}&\leq \int C_0^{\frac{1}{2}}e^{-\tfrac{1}{2} f^i_{p_0,0}(\cdot,-\tau)}dg^i_{-\tau}=C_0^{\frac{1}{2}}(4\pi\tau)^{\frac{n}{2}}\int e^{\tfrac{1}{2} f^i_{p_0,0}(\cdot,-\tau)}d\nu^i_{-\tau}
    \\
    &\leq C_0^{\frac{1}{2}}(4\pi\tau)^{\frac{n}{2}}e^{\frac{n}{2}},
\end{align*}
where we have also used the fact that $R\geq 0$ on an ancient solution (c.f. \cite{CBl09}); this is exactly (\ref{nonsense501}). The rest of the proof is the same as the proof of \cite[Theorem 14.45(a)]{Bam20c}.
\end{proof}

In order to estimate $\overline{\W}(g_{t_0},u_0,\tau_0)$, we shall now defined another set of notations. Let 
\begin{eqnarray}\label{someofthedefinition}
\tau_t&:=&t_0+\tau_0-t\quad\text{for all }\quad t\leq t_0
\\\nonumber
u(\cdot,t)&:=&(4\pi\tau_t)^{-\frac{n}{2}}e^{-f(\cdot,t)}\quad\text{for all }\quad t\leq t_0,
\\\nonumber
\overline{\W}(t)&:=&\overline{\W}\big(g_t,u(\cdot,t),\tau_t\big)\quad\text{for all }\quad t\leq t_0,
\\\nonumber
\N(t)&:=&\int_{M}f(\cdot,t)u(\cdot,t)dg_{t}-\frac{n}{2}\quad\text{for all }\quad t\leq t_0.
\end{eqnarray}

The following lemma is the same as \cite[Lemma 9.1, Theorem 9.3]{CMZ21}.

\begin{Lemma}
We have $$\frac{d}{dt} \overline{\W}(t)\geq 0 \quad\text{ for all }\quad t<t_0$$ and $$\lim_{t\rightarrow t_0-}\overline{\W}(t)=\overline{\W}(g_{t_0},u_0,\tau_0).$$
\end{Lemma}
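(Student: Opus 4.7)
The plan is to reduce the lemma to Perelman's classical $\W$-entropy monotonicity. First I would observe that, under the substitution $u(\cdot,t) = (4\pi\tau_t)^{-n/2} e^{-f(\cdot,t)}$ built into (\ref{someofthedefinition}), a direct algebraic check gives $|\nabla u|^2/u^2 = |\nabla f|^2$ and $-\log u - \frac{n}{2}\log(4\pi\tau_t) - n = f - n$, so that
\[
\overline{\W}(t) = \int_M \left[\tau_t\bigl(|\nabla f|^2 + R\bigr) + f - n\right] u(\cdot,t)\, dg_t = \W\bigl(g_t, f(\cdot,t), \tau_t\bigr).
\]
Since $u$ solves the conjugate heat equation $(-\partial_t - \Delta + R) u = 0$ on $M \times (-\infty, t_0]$ and $d\tau_t/dt = -1$, the coupling hypotheses of Perelman's identity
\[
\frac{d}{dt}\W\bigl(g_t, f(\cdot,t), \tau_t\bigr) = 2\tau_t \int_M \Bigl| \Ric + \nabla^2 f - \frac{g_t}{2\tau_t} \Bigr|^2 u\, dg_t
\]
are satisfied, and the right-hand side is manifestly nonnegative.

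Because $M$ is not assumed compact, the above identity requires a cutoff justification before the derivative can be computed rigorously under the integral sign. I would introduce a spatial cutoff $\eta_R$ supported in $B_{g_{t_0}}(p_0, 2R)$ and equal to $1$ on $B_{g_{t_0}}(p_0, R)$, carry out Perelman's integration by parts against $\eta_R$, and send $R \to \infty$. The resulting error terms are tail integrals of $u$, $|\nabla f|^2 u$, $R u$, and $|f| u$ outside $B_{g_{t_0}}(p_0, R)$; these are controlled by combining Lemma~\ref{upperbound}, which gives $u \le C_0 K(p_0, 0 \,|\, \cdot, t)$, with the Gaussian upper bound on the conjugate heat kernel from \cite[Theorem 26.25]{RFV3} and the curvature bound (\ref{curvaturebound}) on each compact time interval. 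This tail control is the step I expect to be the main technical obstacle, though it is routine once the Gaussian bounds are in place, as carried out in \cite[Lemma 9.1]{CMZ21}.

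For the limit as $t \to t_0^-$, I would argue by parabolic regularity: since $u_0 = (\sqrt{u_0})^2 \in C_0^\infty(M)$, standard theory yields $u(\cdot, t) \to u_0$ in $C^\infty_{\mathrm{loc}}(M)$, so every pointwise integrand in the definition of $\overline{\W}(t)$ converges to its value at $t = t_0$ on each compact set. The contribution from $M \setminus B_{g_{t_0}}(p_0, R)$ is again handled by dominated convergence via the Gaussian majorant above, using the elementary estimate $|s\log s| \leq C(\sqrt{s} + s^2)$ to absorb the $u\log u$ term. Passing to the limit in each piece then yields $\overline{\W}(t) \to \overline{\W}(g_{t_0}, u_0, \tau_0)$, following \cite[Theorem 9.3]{CMZ21}.
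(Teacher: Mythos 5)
Your proposal is correct and follows essentially the same route as the paper, which disposes of this lemma simply by citing \cite[Lemma 9.1, Theorem 9.3]{CMZ21}: the monotonicity is Perelman's $\W$-entropy identity justified on the noncompact manifold via a cutoff, the bound $u\le C_0K(p_0,0\,|\,\cdot,\cdot)$ of Lemma \ref{upperbound}, the Gaussian estimates of \cite[Theorem 26.25]{RFV3}, and (\ref{curvaturebound}), while the limit $t\to t_0^-$ is the continuity statement of \cite[Theorem 9.3]{CMZ21}. Your reconstruction of that argument (including the reduction $\overline{\W}(t)=\W(g_t,f(\cdot,t),\tau_t)$ and the tail control) matches the cited proof, so no further comment is needed.
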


\begin{Lemma}\label{thelimitofthenashentropy}
We have
$$\lim_{t\rightarrow \infty}\N(\tau_it)=\lim_{i\rightarrow \infty}\overline{\N}^i(\tau)\equiv \N_{x^\infty}(\tau).$$
All the terms above are constants independent of $t<0$ or $\tau>0$.
\end{Lemma}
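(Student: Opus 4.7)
The plan is to compare $\N(\tau_i t)$ with $\overline{\N}^i(-t)$ directly, for each fixed $t<0$, via the parabolic rescaling. The previous lemma already identifies $\lim_{i\to\infty}\overline{\N}^i(\tau)=\N_{x^\infty}(\tau)$ and tells us that this limit is independent of $\tau$, so it will suffice to show that $\overline{\N}^i(-t)-\N(\tau_i t)\to 0$ as $i\to\infty$.

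The first step is to derive an explicit scaling identity between $\bar f^i$ and $f$. Since $\mu^i_s=\mu_{\tau_is}$ as measures on $M$ and $dg_{i,s}=\tau_i^{-n/2}dg_{\tau_is}$, writing the density of $\mu^i_s$ in two ways,
\begin{equation*}
(4\pi|s|)^{-\frac{n}{2}}e^{-\bar f^i(\cdot,s)}\,dg_{i,s}=u(\cdot,\tau_is)\,dg_{\tau_is}=(4\pi\tau_{\tau_is})^{-\frac{n}{2}}e^{-f(\cdot,\tau_is)}\,dg_{\tau_is},
\end{equation*}
and canceling the volume forms yields
\begin{equation*}
\bar f^i(\cdot,s)=f(\cdot,\tau_is)-\frac{n}{2}\log\!\left(\frac{|s|\tau_i}{\tau_{\tau_is}}\right)
\end{equation*}
for all $s\in(-\infty,\tau_i^{-1}t_0)$, where $\tau_{\tau_is}=t_0+\tau_0-\tau_is$ is the scale function from \eqref{someofthedefinition}.

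The second step is to substitute this identity at $s=-\tau$ (so $\tau_is=\tau_it$ with $t=-\tau<0$) into the definition of $\overline{\N}^i(\tau)$, using $\int_M u(\cdot,\tau_i s)\,dg_{\tau_is}=1$:
\begin{equation*}
\overline{\N}^i(-t)=\int_M f(\cdot,\tau_i t)u(\cdot,\tau_i t)\,dg_{\tau_it}-\frac{n}{2}\log\!\left(\frac{|t|\tau_i}{t_0+\tau_0-\tau_it}\right)-\frac{n}{2}=\N(\tau_it)-\frac{n}{2}\log\!\left(\frac{|t|\tau_i}{t_0+\tau_0-\tau_it}\right).
\end{equation*}
For any fixed $t<0$, the argument of the logarithm tends to $1$ as $i\to\infty$, so the correction term vanishes.

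Combining the two steps, $\N(\tau_it)$ and $\overline{\N}^i(-t)$ share the same limit, and by the previous lemma this common limit is $\N_{x^\infty}(-t)$, which itself is independent of $t$ by the soliton structure. I do not anticipate a genuine obstacle here; the entire content is keeping track of the parabolic rescaling and of the two slightly different time–to–scale conventions ($|t|$ versus $\tau_t$). The only point worth double–checking is that the log correction uses a ratio that is bounded away from $0$ and $\infty$ uniformly for $t$ in a compact subset of $(-\infty,0)$, so that the limit $i\to\infty$ is taken cleanly; this is immediate from $\tau_{\tau_it}=t_0+\tau_0-\tau_it\sim\tau_i|t|$.
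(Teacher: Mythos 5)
Your proposal is correct and follows essentially the same route as the paper: both derive the scaling identity $f(\cdot,\tau_i t)=\bar f^i(\cdot,t)-\tfrac{n}{2}\log\bigl(\tau_{\tau_i t}/(\tau_i|t|)\bigr)$, substitute it into the definitions to get $\N(\tau_i t)=\overline{\N}^i(|t|)-\tfrac{n}{2}\log\bigl(1+\tfrac{\tau_0+t_0}{\tau_i|t|}\bigr)$, and let $i\to\infty$ using the preceding lemma. The bookkeeping of the log correction and its vanishing is exactly the paper's argument.
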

\begin{proof}
Let us fix an arbitrary $t<0$. Since
\begin{eqnarray*}
 f(\cdot,\tau_it)&=&\bar f^i(\cdot,t)-\frac{n}{2}\log\left(\frac{\tau_{\tau_it}}{\tau_i|t|}\right)
 \\
 &=&\bar f^i(\cdot,t)-\frac{n}{2}\log\left(1+\frac{\tau_0+t_0}{\tau_i|t|}\right),
\end{eqnarray*}
we have, by the definition of $\N$ and $\overline{\N}^i$,
$$\N(\tau_it)=\overline{\N}^i(|t|)-\frac{n}{2}\log\left(1+\frac{\tau_0+t_0}{\tau_i|t|}\right)\quad\text{ for all $i$ large enough}.$$
Taking $i\rightarrow\infty$, the lemma then follows immediately.
\end{proof}

With the preparations above, we are ready to show that the limit in Lemma \ref{thelimitofthenashentropy} is a lower bound of $\overline{\W}(g_{t_0},u_0,\tau_0)$ and thereby prove our main theorem.

\begin{proof}[Proof of Theorem \ref{nu-functional}]
In the proof, we shall retain the notations in (\ref{someofthedefinition}). First of all, we compute
\begin{eqnarray*}
-\frac{d}{dt}\mathcal{N}(t)&=& -\frac{d}{dt} \int_M fu\,dg_t=-\int_M u\Box f dg_t+\int_M  f \Box^*u \,dg_t\\
&=&-\int_M u\Box f \,dg_t=\int_M \left(-\frac{\partial f}{\partial t}+\Delta f\right) u\,dg_t\\
&=&\int_M \left(2\Delta f-|\nabla f|^2+R-\frac{n}{2\tau_t}\right)u\,dg_t\\
&=&\int_M \left(|\nabla f|^2+R\right)u\,dg_t-\frac{n}{2\tau_t}.
\end{eqnarray*}
Hence 
\begin{eqnarray*}
    -\tau_t\frac{d}{d t}\mathcal{N}(t)&=&\int_M \left(\tau_t\left(|\nabla f|^2+R\right)+f-n\right)u\,dg_t-\mathcal{N}(t)\\
    &=& \bW(t)-\mathcal{N}(t),
\end{eqnarray*}
and $$-\frac{d}{d t}\left(\tau_t\mathcal{N}(t)\right)=\bW(t).$$ By the monotonicity of $\bW(t)$, we have
\begin{align*}
&\frac{1}{\tau_i}\big(\tau_{-2\tau_i}\mathcal{N}(-2\tau_i)-\tau_{-\tau_i}\mathcal{N}(-\tau_i)\big)
\\
=&-\frac{1}{\tau_i}\int_{-2\tau_i}^{-\tau_i}\frac{d}{d t}\big(\tau_t\mathcal{N}(t)\big) dt =\frac{1}{\tau_i}\int_{-2\tau_i}^{-\tau_i}\bW(t) dt \leq \bW(g_{t_0},u_0,\tau_0).
\end{align*}
Letting $i\to \infty$, and using (\ref{thelimitofthenashentropy}), we have
\begin{equation}
\N_{x^\infty}(\tau)=\lim_{i\rightarrow\infty}\left(\frac{\tau_{-2\tau_i}}{\tau_i}\mathcal{N}(-2\tau_i)-\frac{\tau_{-\tau_i}}{\tau_i}\mathcal{N}(-\tau_i)\right)\leq \bW(g_{t_0},u_0,\tau_0).
\end{equation}
Since $t_0$, $u_0$, and $\tau_0$ are arbitrarily fixed, this finishes the proof of the theorem.
\end{proof}

\section{Applications to Steady Ricci Solitons}

Let $(M^n,g,f)$ be a complete steady gradient Ricci soliton satisfying
\[
    \Ric = \nabla^2 f,\quad
    R+|\nabla f|^2=1.
\]
By \cite{CBl09}, $R\ge 0$ everywhere on $M$, and as a consequence $$|\nabla f|\leq 1.$$ A complete steady gradient Ricci soliton generates a canonical solution $g(t)$ to the Ricci flow. If $\phi_t$ is the flow of the vector field $-\nabla f$ with $\phi_0=\text{  id }$, then $\phi_t$ exists for all time because of $|\nabla f|\leq 1$, and $(g(t):=\phi_t^*g)_{ t\in \mathbb{R}}$ solves the Ricci flow equation with $g(0)=g$. We prove a volume lower bound for steady gradient solitons whose canonical form $g(t)$ satisfy all the assumptions in Theorem \ref{nu-functional}. The argument only requires a  Sobolev inequality on $M$, so we shall show a slightly more general statement:

\begin{Proposition}
\label{prop: sob vol}
Suppose that we have a  Sobolev inequality on $(M^n,g,f):$
\[
\left(\int_M 
|u|^{\frac{2n}{n-2}}\, dg
\right)^{\frac{n-2}{n}}
\le C_{\rm Sob}
\int_M 4|\nabla u|^2 + Ru^2\, dg,
\]
for any compactly supported and locally Lipschitz function $u$, where $C_{\rm Sob}<\infty$ is the Sobolev constant.
Fix a point $o\in M.$
Then there is a constant $c$ depending on $n$ and $C_{\rm Sob}$ such that
\[
    |B_r(o)|\ge c r^{n/2}\quad
    \text{ for all }\quad r>10. 
\]
\end{Proposition}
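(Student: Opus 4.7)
The plan is to test the Sobolev inequality against a radial cutoff on $B_r(o)$ and to exploit the two structural identities of a steady soliton---namely $R=\Delta f$ (from tracing $\Ric=\nabla^2 f$) and $|\nabla f|\le 1$ (from $R\ge 0$ together with $R+|\nabla f|^2=1$)---to turn the scalar curvature contribution on the right-hand side into something smaller than the volume by a factor of $1/r$. The resulting recursion will then iterate to the stated $r^{n/2}$ lower bound.

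Concretely, I would pick a Lipschitz cutoff $\eta$ with $\eta=1$ on $B_{r/2}(o)$, $\eta=0$ outside $B_r(o)$, and $|\nabla\eta|\le 2/r$, and substitute $u=\eta$ into the Sobolev inequality to obtain
\[
|B_{r/2}(o)|^{(n-2)/n}\le C_{\mathrm{Sob}}\left(4\int_M|\nabla\eta|^2\,dg+\int_M R\eta^2\,dg\right).
\]
The gradient term is trivially at most $16r^{-2}|B_r(o)|$. For the curvature term I would integrate by parts using $R=\Delta f$:
\[
\int_M R\eta^2\,dg=-2\int_M \eta\langle\nabla\eta,\nabla f\rangle\,dg\le 2\int_M\eta|\nabla\eta|\,dg\le \frac{4}{r}|B_r(o)|,
\]
using $|\nabla f|\le 1$ in the first inequality and $\eta\le 1$, $|\nabla\eta|\le 2/r$ in the last. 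For $r\ge 4$ the $1/r$ contribution dominates, giving
\[
|B_{r/2}(o)|^{(n-2)/n}\le \frac{C}{r}|B_r(o)|
\]
with $C$ depending only on $n$ and $C_{\mathrm{Sob}}$.

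The last step is to iterate this inequality along a dyadic sequence. Setting $W_k:=|B_{2^k r_0}(o)|/(2^k r_0)^{n/2}$ for $r_0=10$, the previous display rearranges to the scalar recursion $W_{k+1}\ge c_1 W_k^{(n-2)/n}$ whose unique positive fixed point is $W^{*}=c_1^{n/2}$. Since the exponent $(n-2)/n$ is strictly less than $1$, this map is a contraction near its fixed point, so starting from any positive $W_0$ one has $W_k\to W^{*}$. This yields $|B_{2^k r_0}(o)|\ge c(n,C_{\mathrm{Sob}})(2^k r_0)^{n/2}$ for $k$ sufficiently large, and monotonicity of $r\mapsto|B_r(o)|$ interpolates the bound to every $r>10$.

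The main technical point, and what I expect to be the only real obstacle, is that the number of iterations needed to reach the fixed point depends on the starting value $W_0$, which a priori depends on the manifold. To keep $c$ depending only on $n$ and $C_{\mathrm{Sob}}$, I would first establish a universal lower bound $|B_{10}(o)|\ge V_0(n,C_{\mathrm{Sob}})>0$: at any scale where $|B_r(o)|$ is small enough, H\"older's inequality lets one absorb $\int R\eta^2\le |B_r(o)|^{2/n}\|\eta\|_{L^{2n/(n-2)}}^2$ into the left-hand side of the Sobolev inequality, leaving a clean Euclidean Sobolev inequality on that scale and hence $|B_r(o)|\ge c(n,C_{\mathrm{Sob}})r^n$. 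Combined with the monotonicity of $r\mapsto|B_r(o)|$, this furnishes the required uniform starting bound at $r=10$ and makes the iteration self-contained.
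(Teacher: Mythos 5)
Your proposal is correct, and its core computation is the same as the paper's: test the Sobolev inequality with a Lipschitz cutoff, convert $\int Ru^2$ via $R=\Delta f$, integration by parts and $|\nabla f|\le 1$ into a term of size $r^{-1}|B_r(o)|$, and iterate the resulting recursion $|B_r|\gtrsim C_{\rm Sob}^{-1}\, r\, |B_{r/2}|^{(n-2)/n}$ dyadically. Where you genuinely diverge is in how the iteration is anchored so that the final constant depends only on $n$ and $C_{\rm Sob}$. The paper (following Saloff-Coste, Theorem 3.1.5) uses the tent function $(r-d(\cdot,o))_+$, keeps a second form of the recursion with an $r^2$ factor valid at scales $r<1$, and iterates \emph{downward} all the way to scale zero; there the unknown small-scale volume disappears because $V(2^{-m}s)^{a^m}\to 1$ for any smooth manifold, so no a priori non-collapsing input is needed. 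You instead iterate \emph{upward} from $r=10$ and must supply a uniform lower bound on $|B_{10}(o)|$, which you get by a dichotomy: either $|B_{10}|$ is not small, or (using $0\le R\le 1$ from the normalization) the curvature term can be absorbed by H\"older, leaving a clean Sobolev inequality on $B_{10}$, and then you invoke the standard fact that such an inequality forces $|B_s(o)|\ge c(n)A^{-n/2}s^n$. That fact is indeed classical (Carron, Akutagawa, Saloff-Coste), so your argument is complete, but be aware that its usual proof is precisely the downward small-scale iteration the paper performs directly, so your route trades the paper's self-contained limit argument for an external citation. Two minor points of hygiene: the recursion $W_{k+1}\ge c_1W_k^{(n-2)/n}$ is an inequality, so rather than speaking of convergence to the fixed point of a contraction you should just use the explicit bound $W_k\ge c_1^{\sum_{j<k}a^j}W_0^{a^k}\ge c_1^{n/2}\min(W_0,1)$, which is what your uniform bound on $W_0$ feeds into; and you should state explicitly that $R\le 1$ (from $R+|\nabla f|^2=1$ and $R\ge 0$) is what justifies the H\"older step $\int R\eta^2\le |B_r|^{2/n}\|\eta\|_{L^{2n/(n-2)}}^2$.
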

\begin{proof}
The proof follows verbatim as Theorem 3.1.5 in \cite{SC} with minor changes. For simplicity, we write
\[
    V(r) := |B_r(o)|.
\]
We choose a Lipschitz test function
\[
    u(x)
    := (r - d(x,o))_+.
\]
Then
\begin{align*}
 \left(\int_M 
|u|^{\frac{2n}{n-2}}\, dg
\right)^{\frac{n-2}{n}}
&\ge \frac{r^2}{4} V(r/2)^{\frac{n-2}{n}},\\
|\nabla u|^2 &\le 1,\ \text{a.e.},\\
\int_M Ru^2
= \int_{B_r(o)} u^2\Delta f 
&= \int_{\partial B_r(o)}  u^2 \partial_r f 
-\int_{B_r(o)} \nabla f \cdot \nabla u^2\\
&\le 2\int_{B_r(o)} u
\le 2rV(r),
\end{align*}
where we integrated by parts and used the fact that $|\nabla f|\le 1.$
Hence, 
\[
\frac{r^2}{4} V(r/2)^{a}
\le C_{\rm Sob} (4+2r)V(r)
\le 4C_{\rm Sob} (1+r)V(r),
\]
where $a=\frac{n-2}{n}.$ So
\begin{align*}
   V(r)\ge c_1\frac{r}{2}  V(r/2)^{a},&\quad \text{ for } r\ge 1;\\
V(r)\ge 
c_1\frac{r^2}{4} V(r/2)^{a},&\quad
\text{ for } r\in (0,1),
\end{align*}
where $c_1=\min\left\{\frac{1}{16C_{\rm Sob}},1\right\}.$
Fixing $r>10$ and let $k=k(r)\ge 1$ be the integer such that
\[
    1\le 2^{-k}r < 2.
\]
Iterating the inequality above, we have
\begin{align*}
    V(r)
    \ge (c_1r)^{\sum_{j=0}^k a^j}
    2^{-\sum_{j=0}^k (j+1)a^j }
    V\left( 2^{-1-k}r\right)^{a^{1+k}}.
\end{align*}
Write $s=2^{-1-k}r\in [1/2,1).$
For any $m\ge 1$, we have
\begin{align*}
    V(s)
    &\ge
    c_1 (s/2)^2
    V(s/2)^a\\
    \ge \cdots&\ge (c_1s^2)^{\sum_{i=0}^m a^i}
    2^{-\sum_{i=0}^m 2(i+1)a^i}
    V(2^{-1-m}s)^{a^{1+m}}.
\end{align*}
Since $a\in (0,1),$ we have
\[
\lim_{m\to \infty}
 V\left( 2^{-m}s\right)^{a^{m}}= 1.
\]
Therefore, taking $m\to \infty,$ we have
\[
V(s)
\ge c(n)c_1^{n/2} s^n
\ge c(n) c_1^{n/2},
\]
where we have used the fact $s\in [1/2,1).$
So
\[
V(r)
\ge 
c(n) c_1^{n/2} r^{\frac{n}{2}(1-a^{1+k})}.
\]
By the definition of $k$, we have
\[ r^{-\frac{n}{2}a^{1+k(r)}} \ge c(n)\quad\text{ if }\quad r>10.
\]
Hence, for $r>10,$ we have
\[
    V(r) \ge c(n)c_1^{n/2} r^{n/2}.
\]

\end{proof}

\textbf{Remark.} The estimate on the integral involving $R$ are similar to Theorem 5.1 in \cite{MS13} (c.f.
Lemma 4.3 in \cite{D16}).

\begin{proof}[Proof of Theorem \ref{thm: steady vol growth}]
From the proof of Proposition \ref{prop: sob vol}, we can see that if the Sobolev constant is $C_{\rm Sob}=C(n)e^{-\frac{2\mu_\infty}{n}}$, then the constant $c$ in the previous proposition can be taken as 
\[
    c = c(n) C_{\rm Sob}^{-n/2}
    = c(n) e^{\mu_\infty}.
\]
This finishes the proof of Theorem \ref{thm: steady vol growth}.
\end{proof}

We end this section by establishing the quadratic volume estimate of positively curved three dimensional steady gradient Ricci soliton.



\def \n {\mathbf{n}}

\begin{proof}[Proof of Theorem \ref{steady 3d vol}]
Catino-Mastrolia-Monticelli \cite[Corollary 1.7]{CMM16} showed that a 3 dimensional steady gradient soliton with $\liminf_{r\to\infty}r^{-2}|B_r(o)|=0$ must either be flat or split isometrically as a quotient of $\mathbb{R}\times \Sigma$, where $\Sigma$ is the cigar soliton. The lower estimate in (\ref{quad vol}) then follows from the positively curved condition. To get the upper bound on the volume, we look at the area growth of the level sets. Since $\nabla ^2 f=\Ric>0$ and $R$ attains its maximum, by a result of Cao-Chen \cite[Proposition 2.3]{CC12}, there exist positive constants $\alpha$ $\in (0,1)$ and $c_0$ such that for any $x$ $\in M$, it holds that
\begin{equation}\label{f proper}
    \alpha\, \dist(x, o)-c_0\leq f(x)\leq \dist(x, o)+c_0.
\end{equation}
By the convexity of $f$ and the identity $\nabla R=-2\Ric(\nabla f)$, $f$ and $R$ have the same unique critical point, say $p_0$, then $R(p_0)=1-|\nabla f|^2(p_0)=1$ and $\max_M R=1$. Moreover, by the Morse Lemma, the level sets $\Gamma_s:=\{x:\, f(x)=s\}$ are all diffeomorphic to $\mathbb{S}^2$ for all $s>\min_M f$. Using $\Ric>0$ and $\nabla R=-2\Ric(\nabla f)$, we can find $\varepsilon_0>0$ and a large positive constant $s_0$ such that on $\{x:\, f(x)\geq s_0\}$, it holds that
$$R\leq \max_M R-\varepsilon_0=1-\varepsilon_0\,;$$
\begin{equation}\label{naf bdd}
    |\nabla f|^2=1-R\geq \varepsilon_0.
\end{equation}
As $\Gamma_s$ is a level set of $f$, its second fundamental form $A_s$ (w.r.t. the normal $\frac{\nabla f}{|\nabla f|}$) is given by $\frac{\nabla ^2 f}{|\nabla f|}=\frac{\Ric}{|\nabla f|}\geq 0$. For any $q$ $\in \Gamma_s$, we can find an orthonormal frame $\{e_i\}_{i=1}^2$ which is an eigenbasis of $A_s$ with eigenvalues $\sigma_i\geq 0$, $i=1, 2$. Let $\tilde{h}_s$ be the induced metric on $\Gamma_s$. Then by the Gauss equation
\begin{equation}\label{Gauss eqn}
\begin{split}
  K_s&=R_{1221}+ \sigma_1\sigma_2> 0\,;\\
  2K_s&=R-2\Ric(\n,\n)+2\sigma_1\sigma_2\\
      &\geq R-2\Ric(\n,\n),
\end{split}
\end{equation}
where $K_s$ and $\n$ denote the Gauss curvature of $(\Gamma_s, \tilde{h}_s)$ and the normal vector $\frac{\nabla f}{|\nabla f|}$, respectively. We then consider the flow $\psi_s$ of the vector field $\frac{\nabla f}{|\nabla f|^2}$ with $\psi_{s_0}=\text{  id }$. When restricted on $\Gamma_{s_0}$, $\psi_s: \Gamma_{s_0}\longrightarrow \Gamma_{s}$ are diffeomorphisms for all $s\geq s_0$. Let $h_s$ be the pull back metric $\psi_s^*\tilde{h}_s$ on $\Gamma_{s_0}$. Then we may compute
\begin{eqnarray*}
\frac{\partial}{\partial s}h_s&=&\psi_s^*\mathcal{L
}_{\frac{\nabla f}{|\nabla f|^2}}g=\psi_s^*\left(\frac{2\nabla ^2 f}{|\nabla f|^2}\right)=\psi_s^*\left(\frac{2\Ric}{|\nabla f|^2}\right),
\end{eqnarray*}
where $\mathcal{L}_{\frac{\nabla f}{|\nabla f|^2}}$ is the Lie derivative with respect to $\frac{\nabla f}{|\nabla f|^2}$. 
We denote by $dh_s$ the volume form induced by the metric $h_s$, then, applying (\ref{naf bdd}) and (\ref{Gauss eqn}), we have
\begin{equation}\label{vol form}
\begin{split}
\frac{\partial }{\partial s}dh_s&= \psi_s^*\left(\frac{R-\Ric(\n,\n)}{|\nabla f|^2}\right)dh_s\\
&\leq \psi_s^*\left(\frac{2K_s+\Ric(\n,\n)}{|\nabla f|^2}\right)dh_s\\
&\leq \varepsilon_0^{-1}\psi_s^*(2K_s+\Ric(\n,\n))\,dh_s.
\end{split}
\end{equation}
By virtue of $\nabla R=-2\Ric(\nabla f)$, we have $\psi_s^*(\Ric(\n,\n))=-\frac{1}{2}\frac{\partial }{\partial s}R\circ \psi_s$. It follows from (\ref{vol form}) and $R\leq 1$ that 
\begin{equation}
\begin{split}
\frac{\partial }{\partial s}\left(e^{\frac{R\circ \psi_s}{2\varepsilon_0}}\,dh_s\right) &\leq \varepsilon_0^{-1}\psi_s^*(2e^{\frac{R}{2\varepsilon_0}}K_s)\,dh_s\\
                                                                          &\leq \varepsilon_0^{-1}e^{\frac{1}{2\varepsilon_0}}\psi_s^*(2K_s)\,dh_s.
\end{split}
\end{equation}
Hence by the Gauss-Bonnet Theorem, we have
\begin{equation}
\begin{split}
\frac{d}{ds}\int_{\Gamma_{s_0}} e^{\frac{R\circ \psi_s}{2\varepsilon_0}}\,dh_s &= \int_{\Gamma_{s_0}}\frac{\partial }{\partial s}\left(e^{\frac{R\circ \psi_s}{2\varepsilon_0}}\,dh_s\right)\\
&\leq \frac{8\pi e^{\frac{1}{2\varepsilon_0}}}{\varepsilon_0}.
\end{split}
\end{equation}
Integrating the above differential inequality with respect to $s$, we can choose a $s_1>s_0$ such that for all $s\geq s_1$, it holds that
\begin{eqnarray*}
\text{ Area }(\Gamma_s)&\leq& \int_{\Gamma_{s_0}} e^{\frac{R\circ \psi_s}{2\varepsilon_0}}\,dh_s\\
&\leq& \frac{8\pi e^{\frac{1}{2\varepsilon_0}}}{\varepsilon_0} (s-s_0)+e^{\frac{1}{2\varepsilon_0}}\text{ Area }(\Gamma_{s_0})\\
&\leq& Cs.
\end{eqnarray*}
We used $R\geq 0$ in the first inequality. The Coarea formula then implies that for all $s\gg s_1$, it holds that
\begin{eqnarray*}
|\{x:\, f(x)\leq s\}|&=&|\{x:\, f(x)< s_1\}|+|\{x:\, s_1\leq f(x)\leq s\}|\\
                     &\leq& |\{x:\, f(x)< s_1\}|+\frac{Cs^2}{2\sqrt{\varepsilon_0}}\\
                     &\leq& C's^2.
 \end{eqnarray*}    
 By (\ref{f proper}), for all large $r$, we have
 $$|B_r(o)|\leq |\{x:\, f(x)\leq r+c_0\}|\leq 4C'r^2.$$
\end{proof}

\bibliography{bibliography}{}

\begin{thebibliography}{CCG{\etalchar{+}}10}





\bibitem[Bam20a]{Bam20a}
Richard~H. Bamler,  \emph{{Entropy and heat kernel bounds on a Ricci flow background}},
  https://arxiv.org/abs/2008.07093 (2020).

\bibitem[Bam20b]{Bam20b}
\bysame, \emph{{Compactness theory of the space of super Ricci flows}},
  https://arxiv.org/abs/2008.09298 (2020).
  
\bibitem[Bam20c]{Bam20c} \bysame, \emph{{Structure theory of non-collapsed limits of Ricci flows}},   https://arxiv.org/abs/2009.03243 (2020).




 






\bibitem[CC12]{CC12}Cao, Huai-Dong; Chen, Qiang. \emph{ On locally conformally flat gradient steady Ricci solitons}. Trans. Amer. Math. Soc. 364 (2012), no. 5, 2377–2391.





\bibitem[CMM16]{CMM16} Catino, Giovanni; Mastrolia, Paolo; Monticelli, Dario D.. \emph{Classification of expanding and steady Ricci solitons with integral curvature decay}, Geom. Topo. 20 (2016), 2665-2685.






\bibitem[CMZ21]{CMZ21} Chan, Pak-Yeung, Zilu Ma, and Yongjia Zhang. \emph{Ancient Ricci flows with asymptotic solitons.} arXiv preprint arXiv:2106.06904 (2021).




\bibitem[CBl09]{CBl09} Chen, Bing-Long. \emph{Strong uniqueness of the Ricci flow}, Journal of Differential Geometry, 82(2): 363-382, 2009.







\bibitem[CCGGIIKLLN10]{RFV3}
Chow, B.; Chu, S.; Glickenstein, D.; Guenther, C.; Isenberg, J.; Ivey, T.; Knopf, D.; Lu, P.; Luo, F.; Ni, L. \emph{The Ricci flow: techniques and applications. Part III. Geometric-Analytic Aspects}, Mathematical Surveys and Monographs,  vol. \textbf{163}, AMS, Providence, RI, 2010.



 
 














\bibitem[D16]{D16}
Alix Deruelle,
\emph{Steady gradient Ricci soliton with curvature in $L^1$}, Comm. Anal. Geom.
20 (2012) 31–53.

















\bibitem[Lai20]{Lai20} Lai, Yi, \emph{A family of 3d steady gradient solitons that are flying wings}, arXiv:2010.07272.




\bibitem[LW20]{LW20} Li, Yu; Wang, Bing. \emph{Heat kernel on Ricci shrinkers}. Calc. Var. Partial Differential Equations 59 (2020), no. 6, Paper No. 194, 84 pp.











\bibitem[MS13]{MS13}
Munteanu, Ovidiu, and Natasa Sesum,
\emph{On gradient Ricci solitons.} Journal of Geometric Analysis 23.2 (2013): 539-561.








\bibitem[Per02]{Per02} Perelman, Grisha, \emph{The entropy formula for the Ricci flow and its geometric applications}, arXiv:math.DG/0211159.




\bibitem[SC]{SC}
Laurent Saloff-Coste, \emph{Aspects of Sobolev-type inequalities.} 
Vol. 289. Cambridge University Press, 2002.







\bibitem[Zhq07]{Zhq07} Zhang, Qi S. \emph{A uniform Sobolev inequality under Ricci flow.} International Mathematics Research Notices 2007 (2007).




\end{thebibliography}
\bibliographystyle{amsalpha}

\newcommand{\etalchar}[1]{$^{#1}$}
\providecommand{\bysame}{\leavevmode\hbox to3em{\hrulefill}\thinspace}
\providecommand{\MR}{\relax\ifhmode\unskip\space\fi MR }
\providecommand{\MRhref}[2]{%
  \href{http://www.ams.org/mathscinet-getitem?mr=#1}{#2}
}
\providecommand{\href}[2]{#2}

\bigskip
\bigskip

\noindent Department of Mathematics, University of California, San Diego, CA, 92093
\\ E-mail address: \verb"pachan@ucsd.edu "
\\

\noindent Department of Mathematics, University of California, San Diego, CA, 92093
\\ E-mail address: \verb"zim022@ucsd.edu"
\\

\noindent School of Mathematics, University of Minnesota, Twin Cities, MN, 55414
\\ E-mail address: \verb"zhan7298@umn.edu"

\end{document}